\documentclass[a4paper,USenglish,cleveref, autoref, thm-restate]{lipics-v2021}

\usepackage{cncartstyle}

\newcounter{numTableRows}

\providetoggle{usePrecompiledGraphs}
\toggletrue{usePrecompiledGraphs}

\tikzstyle{smallgraph}=[
  every node/.style={circle,draw,fill=black!30,inner sep=0pt,minimum width=2.5pt,font={\footnotesize}},
]

\newcommand{\adrawingFig}{
	\begin{figure}
		\centering
		\tikzstyle{origE} = [
			draw=red!80!black
		]
		\tikzstyle{copyE} = [
			densely dotted,
			draw=blue,
			thick,
		]
		\tikzstyle{copyEleft} = [
			copyE,
			bend left=40
			looseness=0.7,
		]
		\tikzstyle{copyEright} = [
			copyE,
			bend right=40
			looseness=0.7,
		]
		\begin{subfigure}[t]{.49\textwidth}
			\centering
			\begin{tikzpicture}[
					smallgraph,
					every node/.style={circle,draw,fill=black!30,inner sep=0pt,minimum width=4pt,font={\footnotesize}},
					every path/.append style={semithick}
				]
				\foreach \x [evaluate=\x as \xShift using \x*1.1] in {0,1,2,3,4} {
					\foreach \n/\b [evaluate=\b as \bw using \b*0.45] in {1/1.5, 2/0.5, 3/-0.5, 4/-1.5}{
						\node (\x-\n) at (\xShift,\bw) {};
					}

					\draw[origE] (\x-1) -- (\x-2) -- (\x-3) -- (\x-4);

					\path ($(\x-2)!.5!(\x-3)$) -- ++(90:1.0cm) node[draw=none,fill=none,minimum width=0pt] () {};%
					\path ($(\x-2)!.5!(\x-3)$) -- ++(270:1.0cm) node[draw=none,fill=none,minimum width=0pt] () {};%
				};

				\draw[origE,bend right=75,looseness=0.7] (0-1) to (0-4);
				\draw[origE,bend right=35] (0-2) to (0-4);
				\draw[origE,bend left=50] (0-1) to (0-3);
				\draw[origE,bend left=75,looseness=0.7] (4-1) to (4-4);
				\draw[origE,bend right=50] (4-2) to (4-4);
				\draw[origE,bend left=35] (4-1) to (4-3);

				\draw[origE,rounded corners=.25cm] (3-1) -- ($(4-1)!.5!(3-1) + (0,.2)$)
					-- ($(4-1)+(.45,.2)$) -- ($(4-4)+(.45,-.2)$)
					-- ($(4-4)!.5!(3-4) + (0,-.2)$) -- (3-4);
				\draw[origE,bend left=50] (3-1) to (3-3);
				\draw[origE,bend right=50] (3-2) to (3-4);

				\foreach \x [evaluate=\x as \outerX using \x*0.15] in {1,2} {
					\draw[origE,rounded corners=.25cm] (\x-1) -- ($(0-1)!.5!(\x-1) + (0,0.05+\outerX)$)
						-- ($(0-1)+(-0.3-\outerX,0.05+\outerX)$) -- ($(0-4) - (0.3+\outerX,0.05+\outerX)$)
						-- ($(0-4)!.5!(\x-4) - (0,0.05+\outerX)$) -- (\x-4);
					\draw[origE,bend left=50] (\x-1) to (\x-3);
					\draw[origE,bend right=50] (\x-2) to (\x-4);
				}

				\foreach \x [remember=\x as \lastx (initially=0)] in {1,2,3,4} {
					\draw[copyE] (\lastx-1) to (\x-1);
					\draw[copyE] (\lastx-2) to (\x-2);
					\draw[copyE] (\lastx-3) to (\x-3);
					\draw[copyE] (\lastx-4) to (\x-4);

				}
			\end{tikzpicture}
			\caption{Optimal drawing with $8$ crossings; the three inner $K_4$-copies are involved in $2$ crossings each.}
			\label{fig:cartesianproduct}
		\end{subfigure}
		\hfill
		\begin{subfigure}[t]{.49\textwidth}
			\centering
			\begin{tikzpicture}[
					smallgraph,
					every node/.style={circle,draw,fill=black!30,inner sep=0pt,minimum width=4pt,font={\footnotesize}},
					every path/.append style={semithick}
				]
				\foreach \x [evaluate=\x as \xShift using \x*1.5] in {0,2,4} {
					\foreach \n/\a/\b [evaluate=\a as \ah using \a*0.35,evaluate=\b as \bw using \b*0.4] in {
						1/1/1.0,
						2/-1/1,
						3/-1/-1.0,
						4/1/-1
					}{
						\node (\x-\n) at ([xshift=+\xShift cm] \ah,\bw) {};
					}

					\draw[origE] (\x-2) -- (\x-3) -- (\x-4) -- (\x-1) -- (\x-2) -- (\x-4) -- (\x-1) -- (\x-3);

					\path ($(\x-2)!.5!(\x-3)$) -- ++(90:1.0cm) node[draw=none,fill=none,minimum width=0pt] () {};%
					\path ($(\x-2)!.5!(\x-3)$) -- ++(270:1.0cm) node[draw=none,fill=none,minimum width=0pt] () {};%
				};
                \foreach \x [evaluate=\x as \xShift using \x*1.5] in {1,3} {
					\foreach \n/\a/\b [evaluate=\a as \ah using \a*0.35,evaluate=\b as \bw using \b*0.4] in {
						1/-1/1.0,
						2/1/1,
						3/1/-1.0,
						4/-1/-1
					}{
						\node (\x-\n) at ([xshift=+\xShift cm] \ah,\bw) {};
					}

					\draw[origE] (\x-2) -- (\x-3) -- (\x-4) -- (\x-1) -- (\x-2) -- (\x-4) -- (\x-1) -- (\x-3);

					\path ($(\x-2)!.5!(\x-3)$) -- ++(90:1.0cm) node[draw=none,fill=none,minimum width=0pt] () {};%
					\path ($(\x-2)!.5!(\x-3)$) -- ++(270:1.0cm) node[draw=none,fill=none,minimum width=0pt] () {};%
				};
				\foreach \x [remember=\x as \lastx (initially 0)] in {1,2,3,4} {
					\draw[copyE, bend left=30, looseness=1.6] (\lastx-1) to (\x-1);
					\draw[copyE, bend left=30, looseness=1.6] (\lastx-2) to (\x-2);
					\draw[copyE, bend right=30, looseness=1.6] (\lastx-3) to (\x-3);
					\draw[copyE, bend right=30, looseness=1.6] (\lastx-4) to (\x-4);

				}
			\end{tikzpicture}
			\caption{\adrawing~for $a=2$ with $11$ crossings.}
			\label{fig:adrawing}
		\end{subfigure}
		\caption{Two drawings of $K_4 \Box P_4$. The $K_4$-copies are solid red; the path edges are dotted blue.}
	\end{figure}
}

\newcommand{\resultsTblLegend}{
	\label{tab:key}
	\begin{tabular}{ccc}
		\toprule
		symbol             & previously known           & solved by us       \\
		\midrule
		\statusNew         & no                         & yes                \\[.0em]
		\statusConfStar    & not peer-reviewed          & yes                \\[.0em]
		\statusConf        & yes                        & yes                \\[.0em]
		\statusFail        & yes                        & no                 \\[.0em]
		\statusNobodyKnows & no                         & no                 \\[.0em]
		\bottomrule
	\end{tabular}
}

\newcommand{\forcesFig}{\includegraphics[page=1]{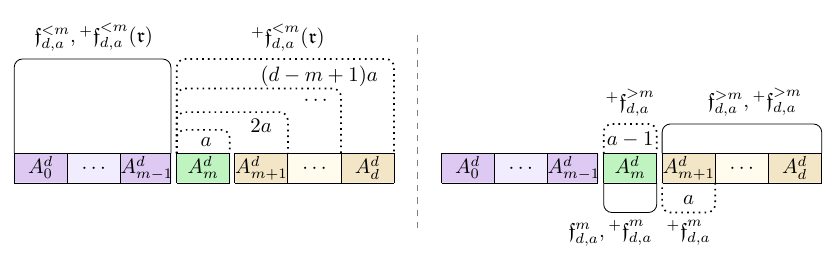}}
\newcommand{\crossingBandsTbl}{\includegraphics[page=2]{results.pdf}}
\newcommand{\numcrossingsFig}{\includegraphics[page=3]{results.pdf}}
\newcommand{\resultsFiveTbl}{\includegraphics[page=4]{results.pdf}}
\newcommand{\resultsSixTbl}{
   \includegraphics[page=5]{results.pdf}
   \includegraphics[page=6]{results.pdf}
   \includegraphics[page=7]{results.pdf}
}
\newcommand{\resultsSevenTbl}{
   \includegraphics[page=8]{results.pdf}
   \includegraphics[page=9]{results.pdf}
   \includegraphics[page=10]{results.pdf}
   \includegraphics[page=11]{results.pdf}
   \includegraphics[page=12]{results.pdf}
   \includegraphics[page=13]{results.pdf}
   \includegraphics[page=14]{results.pdf}
   \includegraphics[page=15]{results.pdf}
   \includegraphics[page=16]{results.pdf}
   \includegraphics[page=17]{results.pdf}
   \includegraphics[page=18]{results.pdf}
   \includegraphics[page=19]{results.pdf}
   \includegraphics[page=20]{results.pdf}
   \includegraphics[page=21]{results.pdf}
}

\newcommand{\specialFortyTwo}{
    \begin{tikzpicture}[smallgraph]
    \definecolor{col0}{rgb}{0.6249089169450681, 0.0009573646476685411, 0.7035877632862583}
    \definecolor{col1}{rgb}{0.0, 0.8, 0.0}
    \definecolor{col2}{rgb}{0.75, 0.375, 0.0}
    \definecolor{col3}{rgb}{0.0006151721264259039, 0.3939063320982763, 0.7164887572747496}
    \definecolor{col4}{rgb}{0.25108805022687397, 0.6941457781079515, 0.4433085415751947}
    \definecolor{col5}{rgb}{0.13992995297254043, 0.000000000000000001, 0.5648521160561512}
    \definecolor{col6}{rgb}{0.1610299703780087, 0.37622735282779585, 0.2479456574066141}
    	\draw ($(-0.375,0.0)+(-0.75,0)$) node[opacity=0.0] (3f) {};
    	\draw ($(-0.375,0.16)+(-0.75,0)$) node[opacity=0.0] (0f) {};
    	\draw ($(-0.375,0.32)+(-0.75,0)$) node[opacity=0.0] (5f) {};
    	\draw ($(-0.375,0.48)+(-0.75,0)$) node[opacity=0.0] (2f) {};
    	\draw ($(-0.375,0.64)+(-0.75,0)$) node[opacity=0.0] (4f) {};
    	\draw ($(-0.375,0.8)+(-0.75,0)$) node[opacity=0.0] (1f) {};
    	\draw ($(0.018666666666666665,0.0)+(-0.75,0)$) node[opacity=0.5] (3mm) {};
    	\draw ($(0.018666666666666665,0.16)+(-0.75,0)$) node[opacity=0.5] (0mm) {};
    	\draw ($(0.042666666666666665,0.32)+(-0.75,0)$) node[opacity=0.5] (5mm) {};
    	\draw ($(-0.06133333333333334,0.48)+(-0.75,0)$) node[opacity=0.5] (2mm) {};
    	\draw ($(-0.06133333333333334,0.64)+(-0.75,0)$) node[opacity=0.5] (4mm) {};
    	\draw ($(0.042666666666666665,0.8)+(-0.75,0)$) node[opacity=0.5] (1mm) {};
    	\draw (0.018666666666666665,0.0) node[] (3m) {};
    	\draw (0.018666666666666665,0.16) node[] (0m) {};
    	\draw (0.042666666666666665,0.32) node[] (5m) {};
    	\draw (-0.06133333333333334,0.48) node[] (2m) {};
    	\draw (-0.06133333333333334,0.64) node[] (4m) {};
    	\draw (0.042666666666666665,0.8) node[] (1m) {};
    	\draw (0.7686666666666666,0.0) node[] (3e) {};
    	\draw (0.7686666666666666,0.16) node[] (0e) {};
    	\draw (0.7926666666666666,0.36) node[] (5e) {};
    	\draw (0.6886666666666666,0.48) node[] (2e) {};
    	\draw (0.6886666666666666,0.64) node[] (4e) {};
    	\draw (0.7926666666666666,0.76) node[] (1e) {};
    	\draw (0,1.008) node[fill=none,draw=none] (spacerTop) {};
    	\draw (0,-0.52800000000000002) node[fill=none,draw=none] (spacerBot) {};
    	\draw (-1.000,0) node[fill=none,draw=none] (spacerLeft) {};
    	\draw (1.125,0) node[fill=none,draw=none] (spacerRight) {};
    	\draw[densely dotted, opacity=0.5] (3f) to (3mm);
    	\draw[densely dotted, opacity=0.5, absolute, out=0, in=180, looseness=1.7] (0f) to (0mm);
    	\draw[densely dotted, opacity=0.5] (5f) to (5mm);
    	\draw[densely dotted, opacity=0.5] (2f) to (2mm);
    	\draw[densely dotted, opacity=0.5, absolute, out=0, in=180, looseness=1.7] (4f) to (4mm);
    	\draw[densely dotted, opacity=0.5] (1f) to (1mm);
    	\draw[opacity=0.5, col2] (3mm) to (0mm);
    	\draw[opacity=0.5, col1] (0mm) to (5mm);
    	\draw[opacity=0.5, col0, bend right=52, looseness=1.0666666666666667] (5mm) to (1mm);
    	\draw[opacity=0.5, col6] (5mm) to (2mm);
    	\draw[opacity=0.5, col3] (2mm) to (4mm);
    	\draw[opacity=0.5, col4] (4mm) to (1mm);
    	\draw[opacity=0.5, densely dotted] (3mm) to (3m);
    	\draw[opacity=0.5, densely dotted] (0mm) to (0m);
    	\draw[opacity=0.5, densely dotted] (5mm) to (5m);
    	\draw[opacity=0.5, densely dotted] (2mm) to (2m);
    	\draw[opacity=0.5, densely dotted] (4mm) to (4m);
    	\draw[opacity=0.5, densely dotted] (1mm) to (1m);
    	\draw[col2] (3m) to (0m);
    	\draw[col1] (0m) to (5m);
    	\draw[col0, thick, rounded corners, looseness=1.0666666666666667] (5m) -- ($(5m)+(0.2,-0.08)$) -- ($(5e)+(0.2,-0.1)$) -- ($(1e)+(0.2,0.1)$) -- ($(1m)+(0.2,0.08)$) -- (1m);
    	\draw[col6] (5m) to (2m);
    	\draw[col3] (2m) to (4m);
    	\draw[col4] (4m) to (1m);
    	\draw[densely dotted] (3m) to (3e);
    	\draw[densely dotted] (0m) to (0e);
    	\draw[densely dotted] (5m) to (5e);
    	\draw[densely dotted] (2m) to (2e);
    	\draw[densely dotted] (4m) to (4e);
    	\draw[densely dotted] (1m) to (1e);
    	\draw[col2] (3e) to (0e);
    	\draw[col1] (0e) to (5e);
    	\draw[col0, bend right=52, looseness=1.0666666666666667] (5e) to (1e);
    	\draw[col6] (5e) to (2e);
    	\draw[col3] (2e) to (4e);
    	\draw[col4] (4e) to (1e);
     \draw (0.0, -0.4) node[fill=white,draw=white,rectangle,inner sep=1pt] (cap) {Two last copies ($G^6_{42}$)};
    \end{tikzpicture}
}

\newcommand{\specialSixtyThree}{
    \begin{tikzpicture}[smallgraph]
    \definecolor{col0}{rgb}{0.0, 0.8, 0.0}
    \definecolor{col1}{rgb}{0.7, 0.0, 0.7}
    \definecolor{col2}{rgb}{0.0, 0.375, 0.75}
    \definecolor{col3}{rgb}{0.75, 0.375, 0.0}
    \definecolor{col4}{rgb}{0.36250000000000004, 0.5437500000000001, 0.36250000000000004}
    \definecolor{col5}{rgb}{0.25489766271245545, 0.008014283176777229, 0.5424609659596662}
    \definecolor{col6}{rgb}{0.6652640214649158, 0.010893464020492842, 0.10055778442684428}
    	\draw ($(-0.375,0.0)+(-0.75,0)$) node[opacity=0.0] (4f) {};
    	\draw ($(-0.375,0.16)+(-0.75,0)$) node[opacity=0.0] (2f) {};
    	\draw ($(-0.375,0.32)+(-0.75,0)$) node[opacity=0.0] (0f) {};
    	\draw ($(-0.375,0.48)+(-0.75,0)$) node[opacity=0.0] (5f) {};
    	\draw ($(-0.375,0.64)+(-0.75,0)$) node[opacity=0.0] (1f) {};
    	\draw ($(-0.375,0.8)+(-0.75,0)$) node[opacity=0.0] (3f) {};
    	\draw ($(0.024,0.00)+(-0.75,0)$) node[opacity=0.5] (4mm) {};
    	\draw ($(-0.10400000000000001,0.16)+(-0.75,0)$) node[opacity=0.5] (2mm) {};
    	\draw ($(0.10400000000000001,0.32)+(-0.75,0)$) node[opacity=0.5] (0mm) {};
    	\draw ($(-0.024,0.48)+(-0.75,0)$) node[opacity=0.5] (5mm) {};
    	\draw ($(-7.401486830834376e-19,0.64)+(-0.75,0)$) node[opacity=0.5] (1mm) {};
    	\draw ($(-7.401486830834376e-19,0.8)+(-0.75,0)$) node[opacity=0.5] (3mm) {};
    	\draw (0.234,0.19) node[] (4m) {};
    	\draw (-0.10400000000000001,0.08) node[] (2m) {};
    	\draw (0.10400000000000001,0.32) node[] (0m) {};
    	\draw (-0.024,0.48) node[] (5m) {};
    	\draw (-7.401486830834376e-19,0.64) node[] (1m) {};
    	\draw (-7.401486830834376e-19,0.8) node[] (3m) {};
    	\draw (0.546,0.19) node[] (4e) {};
    	\draw (0.854,0.12) node[] (2e) {};
    	\draw (0.646,0.32) node[] (0e) {};
    	\draw (0.774,0.48) node[] (5e) {};
    	\draw (0.75,0.64) node[] (1e) {};
    	\draw (0.75,0.8) node[] (3e) {};
    	\draw (0,1.008) node[fill=none,draw=none] (spacerTop) {};
    	\draw (0,-0.52800000000000002) node[fill=none,draw=none] (spacerBot) {};
    	\draw (-0.375,0) node[fill=none,draw=none] (spacerLeft) {};
    	\draw (1.125,0) node[fill=none,draw=none] (spacerRight) {};
    	\draw[densely dotted, thick, opacity=0.5, absolute, out=0, in=180, looseness=1.2] (4f) to (4mm);
    	\draw[densely dotted, opacity=0.5, absolute, out=0, in=180, looseness=1.7] (2f) to (2mm);
    	\draw[densely dotted, opacity=0.5] (0f) to (0mm);
    	\draw[densely dotted, opacity=0.5] (5f) to (5mm);
    	\draw[densely dotted, opacity=0.5, absolute, out=0, in=180, looseness=1.7] (1f) to (1mm);
    	\draw[densely dotted, opacity=0.5] (3f) to (3mm);
    	\draw[opacity=0.5, col5, bend right=38, looseness=1.0333333333333334] (4mm) to (0mm);
    	\draw[opacity=0.5, col3] (4mm) to (2mm);
    	\draw[opacity=0.5, col6] (2mm) to (0mm);
    	\draw[opacity=0.5, col6, bend left=38, looseness=1.0333333333333334] (2mm) to (5mm);
    	\draw[opacity=0.5, col1] (0mm) to (5mm);
    	\draw[opacity=0.5, col0] (5mm) to (1mm);
    	\draw[opacity=0.5, col1] (1mm) to (3mm);
    	\draw[thick, densely dotted, absolute, out=-10, in=-100, looseness=0.8] (4mm) to (4m);
    	\draw[opacity=0.5, densely dotted] (2mm) to (2m);
    	\draw[opacity=0.5, densely dotted] (0mm) to (0m);
    	\draw[opacity=0.5, densely dotted] (5mm) to (5m);
    	\draw[opacity=0.5, densely dotted] (1mm) to (1m);
    	\draw[opacity=0.5, densely dotted] (3mm) to (3m);
    	\draw[col5] (4m) to (0m);
    	\draw[col3] (4m) to (2m);
    	\draw[col6] (2m) to (0m);
    	\draw[col6, bend left=38, looseness=1.0333333333333334] (2m) to (5m);
    	\draw[col1] (0m) to (5m);
    	\draw[col0] (5m) to (1m);
    	\draw[col1] (1m) to (3m);
    	\draw[densely dotted, thick] (4m) to (4e);
    	\draw[densely dotted, thick, absolute, out=-10, in=190] (2m) to (2e);
    	\draw[densely dotted] (0m) to (0e);
    	\draw[densely dotted] (5m) to (5e);
    	\draw[densely dotted] (1m) to (1e);
    	\draw[densely dotted] (3m) to (3e);
    	\draw[col5] (4e) to (0e);
    	\draw[col3] (4e) to (2e);
    	\draw[col6] (2e) to (0e);
    	\draw[col6, bend right=38, looseness=1.0333333333333334] (2e) to (5e);
    	\draw[col1] (0e) to (5e);
    	\draw[col0] (5e) to (1e);
    	\draw[col1] (1e) to (3e);
     \draw (0.0, -0.4) node[fill=white,draw=white,rectangle,inner sep=1pt] (cap) {Two last copies ($G^6_{63}$)};
    \end{tikzpicture}
}

\newcommand{\specialEightyTwo}{
    \begin{tikzpicture}[smallgraph]
    \definecolor{col0}{rgb}{0.0, 0.8, 0.0}
    \definecolor{col1}{rgb}{0.7, 0.0, 0.7}
    \definecolor{col2}{rgb}{0.0, 0.375, 0.75}
    \definecolor{col3}{rgb}{0.75, 0.375, 0.0}
    \definecolor{col4}{rgb}{0.36250000000000004, 0.5437500000000001, 0.36250000000000004}
    \definecolor{col5}{rgb}{0.26182363941018166, 0.022576030499820685, 0.533127446741137}
    \definecolor{col6}{rgb}{0.618946624832486, 0.003123796557973679, 0.10732816870139572}
    	\draw (-0.375,0.32) node[opacity=0.0] (0f) {};
    	\draw (-0.375,0.48) node[opacity=0.0] (5f) {};
    	\draw (-0.375,0.64) node[opacity=0.0] (2f) {};
    	\draw (-0.375,0.8) node[opacity=0.0] (1f) {};
    	\draw (-0.375,0.16) node[opacity=0.0] (3f) {};
    	\draw (-0.375,0.0) node[opacity=0.0] (4f) {};
    	\draw (0.043132860308077635,0.0) node[] (4m) {};
    	\draw (-0.1180878210877411,0.16) node[] (3m) {};
    	\draw (0.03650544331010812,0.32) node[] (0m) {};
    	\draw (0.03319173481112335,0.48) node[] (5m) {};
    	\draw (-0.07080826518887666,0.64) node[] (2m) {};
    	\draw (0.03319173481112335,0.8) node[] (1m) {};
    	\draw ($(-0.043132860308077635,0.0)+(0.75,0)$) node[] (4e) {};
    	\draw ($(0.1180878210877411,0.16)+(0.75,0)$) node[] (3e) {};
    	\draw ($(-0.03650544331010812,0.32)+(0.75,0)$) node[] (0e) {};
    	\draw ($(-0.03319173481112335,0.48)+(0.75,0)$) node[] (5e) {};
    	\draw ($(0.07080826518887666,0.64)+(0.75,0)$) node[] (2e) {};
    	\draw ($(-0.03319173481112335,0.8)+(0.75,0)$) node[] (1e) {};
    	\draw ($(0.7712613109346991,0.0)+(0.75,0)$) node[opacity=0.5] (4ee) {};
    	\draw ($(0.7851177173952502,0.16)+(0.75,0)$) node[opacity=0.5] (3ee) {};
    	\draw ($(0.6341242259448514,0.32)+(0.75,0)$) node[opacity=0.5] (0ee) {};
    	\draw ($(0.8128305303163521,0.48)+(0.75,0)$) node[opacity=0.5] (5ee) {};
    	\draw ($(0.6912613109346991,0.64)+(0.75,0)$) node[opacity=0.5] (2ee) {};
    	\draw ($(0.7952613109346991,0.8)+(0.75,0)$) node[opacity=0.5] (1ee) {};
    	\draw (0,1.008) node[fill=none,draw=none] (spacerTop) {};
    	\draw (0,-0.52800000000000002) node[fill=none,draw=none] (spacerBot) {};
    	\draw (-0.375,0) node[fill=none,draw=none] (spacerLeft) {};
    	\draw (1.125,0) node[fill=none,draw=none] (spacerRight) {};
    	\draw[densely dotted, opacity=0.5] (0f) to (0m);
    	\draw[densely dotted, opacity=0.5] (5f) to (5m);
    	\draw[densely dotted, opacity=0.5] (2f) to (2m);
    	\draw[densely dotted, opacity=0.5] (1f) to (1m);
    	\draw[densely dotted, opacity=0.5] (4f) to (4m);
    	\draw[densely dotted, opacity=0.5] (3f) to (3m);
    	\draw[col5, bend right=38, looseness=1.0333333333333334] (4m) to (0m);
    	\draw[col2] (4m) to (3m);
    	\draw[col4, bend right=52, looseness=1.0666666666666667] (4m) to (5m);
    	\draw[col2] (3m) to (0m);
    	\draw[col5, bend left=38, looseness=1.0333333333333334] (3m) to (5m);
    	\draw[col1] (0m) to (5m);
    	\draw[col0, bend right=38, looseness=1.0333333333333334] (5m) to (1m);
    	\draw[col6] (5m) to (2m);
    	\draw[densely dotted] (4m) to (4e);
    	\draw[densely dotted, thick, absolute, in=-80, out=-100] (3m) to (3e);
    	\draw[densely dotted] (0m) to (0e);
    	\draw[densely dotted] (5m) to (5e);
    	\draw[densely dotted] (2m) to (2e);
    	\draw[densely dotted] (1m) to (1e);
    	\draw[col5, bend left=38, looseness=1.0333333333333334] (4e) to (0e);
    	\draw[col2] (4e) to (3e);
    	\draw[col4, bend left=52, looseness=1.0666666666666667] (4e) to (5e);
    	\draw[col2] (3e) to (0e);
    	\draw[col5, bend right=38, looseness=1.0333333333333334] (3e) to (5e);
    	\draw[col1] (0e) to (5e);
    	\draw[col0, bend left=38, looseness=1.0333333333333334] (5e) to (1e);
    	\draw[col6] (5e) to (2e);
    	\draw[densely dotted, opacity=0.5] (0e) to (0ee);
    	\draw[densely dotted, opacity=0.5] (5e) to (5ee);
    	\draw[densely dotted, opacity=0.5] (2e) to (2ee);
    	\draw[densely dotted, opacity=0.5] (1e) to (1ee);
    	\draw[densely dotted, opacity=0.5] (4e) to (4ee);
    	\draw[densely dotted, opacity=0.5] (3e) to (3ee);
    	\draw[col5, opacity=0.5, bend left=38, looseness=1.0333333333333334] (4ee) to (0ee);
    	\draw[col2, opacity=0.5] (4ee) to (3ee);
    	\draw[col4, opacity=0.5, bend right=52, looseness=1.0666666666666667] (4ee) to (5ee);
    	\draw[col2, opacity=0.5] (3ee) to (0ee);
    	\draw[col5, opacity=0.5, bend right=38, looseness=1.0333333333333334] (3ee) to (5ee);
    	\draw[col1, opacity=0.5] (0ee) to (5ee);
    	\draw[col0, opacity=0.5, bend right=38, looseness=1.0333333333333334] (5ee) to (1ee);
    	\draw[col6, opacity=0.5] (5ee) to (2ee);
     \draw (0.75, -0.4) node[fill=white,draw=white,rectangle,inner sep=1pt] (cap) {Two middle copies ($G^6_{82}$)};
    \end{tikzpicture}
}

\bibliographystyle{plainurl}%
\title{A Systematic Approach to Crossing Numbers of Cartesian Products with Paths}
 \titlerunning{A Systematic Approach to Crossing Numbers of Cartesian Products with Paths}

\author{Zayed Asiri}
{Flinders University, Adelaide, Australia}
{asir0030@flinders.edu.au}
{}
{}

\author{Ryan Burdett}
{Flinders University, Adelaide, Australia}
{ryan.burdett@flinders.edu.au}
{}
{}

\author{Markus Chimani}
{Theoretical Computer Science, Osnabrück University, Osnabrück, Germany}
{markus.chimani@uos.de}
{https://orcid.org/0000-0002-4681-5550}
{}

\author{Michael Haythorpe}
{Flinders University, Adelaide, Australia}
{michael.haythorpe@flinders.edu.au}
{https://orcid.org/0000-0001-8143-6583}
{Corresponding author}

\author{Alex Newcombe}
{Flinders University, Adelaide, Australia}
{alex.newcombe@flinders.edu.au}
{https://orcid.org/0000-0002-4401-0951}
{}

\author{Mirko H.~{Wagner}}
{Theoretical Computer Science, Osnabrück University, Osnabrück, Germany}
{mirko.wagner@uos.de}
{https://orcid.org/0000-0003-4593-8740}
{}

\authorrunning{Z.~Asiri, R.~Burdett, M.~Chimani, M.~Haythorpe, A.~Newcombe, M.~H.~Wagner}

\Copyright{Zayed Asiri, Ryan Burdett, Markus Chimani, Michael Haythorpe, Alex Newcombe, Mirko H.~Wagner} %

\ccsdesc[100]{\textcolor{red}{Replace ccsdesc macro with valid one}} %

\keywords{Dummy keyword} %

\category{} %

\relatedversion{} %

\acknowledgements{I want to thank \dots}%

\EventEditors{...}
\EventNoEds{0}
\EventLongTitle{...}
\EventShortTitle{...}
\EventAcronym{...}
\EventYear{...}
\EventDate{...}
\EventLocation{...}
\EventLogo{}
\SeriesVolume{0}
\ArticleNo{0}

\begin{document}
\nolinenumbers

\maketitle

{\abstract Determining the crossing numbers of Cartesian products of small graphs with arbitrarily large paths has been an ongoing topic of research since the 1970s. Doing so requires the establishment of coincident upper and lower bounds; the former is usually demonstrated by providing a suitable drawing procedure, while the latter often requires substantial theoretical arguments. Many such papers have been published, which typically focus on just one or two small graphs at a time, and use ad hoc arguments specific to those graphs. We propose a general approach which, when successful, establishes the required lower bound. This approach can be applied to the Cartesian product of any graph with arbitrarily large paths, and in each case involves solving a modified version of the crossing number problem on a finite number (typically only two or three) of small graphs. We demonstrate the potency of this approach by applying it to Cartesian products involving all 133 graphs $G$ of orders five or six, and show that it is successful in 128 cases. This includes 60 cases which a recent survey listed as either undetermined, or determined only in journals without adequate peer review.}

\section{Introduction}\label{sec-Introduction}
In this paper, we will refer to a number of common graph families, and so we list them upfront to aid the reader. $P_n$, $C_n$, $S_n$ is the path, cycle, star with $n$ edges and thus on $n+1$, $n$, $n+1$ vertices respectively. The Cartesian product of two graphs $G$ and $H$ is written as $G \Box H$. What results is a graph with vertex set $V(G) \times V(H)$, and edges between vertices $(u,u')$ and $(v,v')$ if and only if either $u = v$ and $(u',v') \in E(H)$, or $u' = v'$ and $(u,v) \in E(G)$.%

A {\em drawing} $D$ of a graph $G$ is a representation of $G$ in the plane, such that each vertex is mapped to a discrete point, and each edge $(a,b)$ is mapped to a closed curve between the points corresponding to vertices $a$ and $b$, which does not intersect with a point corresponding to any other vertex. If two curves intersect in such a drawing, we say that there is a {\em crossing} between their corresponding edges, and we denote the number of crossings in the drawing $D$ as $cr_D(G)$. Then the {\em crossing number problem} (CNP) is to determine $\crg(G) \coloneqq \min_D cr_D(G)$, the minimum number of crossings among all possible drawings of $G$.
We can assume that all intersections are crossings, rather than tangential, and also that three edges never intersect at a common point. Furthermore, a drawing is said to be {\em good} if edges incident with a common vertex do not intersect, and no two edges intersect more than once. It is well known that for any graph $G$, $\crg(G)$ crossings are achieved by a good drawing.

CNP is known to be NP-hard \cite{gareyjohnson1983}, and is notoriously difficult to solve, even for relatively small graphs. Indeed, $\crg(K_{13})$ and $\crg(K_{9,9})$ are still unknown despite significant effort. Nonetheless, there are some infinite families of graphs for which the crossing numbers are known; these are summarised in the recent survey paper by Clancy et al.~\cite{clancyetal2019}. The most prolific of these are families which result from Cartesian products, the study of which originated with a conjecture by Harary et al. \cite{hararykainen1973} in 1973 that $\crg(C_m \Box C_n) = (m-2)n$ for $n \geq m \geq 3$. Despite significant effort \cite{beinekeringeisen1980,deanrichter1995,richterthomassen1995,andersonetal1996,andersonetal1996_2,klescetal1996,richtersalazar2001,adamssonrichter2004,glebskysalazar2004} to date the conjecture has only been resolved for $m \leq 7$, and for $n \geq m(m+1)$. While resolving the $m = 4$ case, Beineke and Ringeisen \cite{beinekeringeisen1980} considered $\crg(G \Box C_n)$ for all six non-isomorphic connected graphs $G$ of order four, and successfully determined all of them except for the case $G = S_3$. This latter case was subsequently settled by Jendrol' and \v{S}cerbov\'{a} \cite{jendrolscerbova1982}. Following this, Kle\v{s}\v{c} \cite{klesc1994} determined $\crg(G \Box P_n)$ and $\crg(G \Box S_n)$ for the same six connected graphs $G$ of order four. This was then followed by a decade-long effort by Kle\v{s}\v{c} \cite{klesc1991,klesc1995,klesc1996,klesc1999,klesc1999_2,klesc2001_2} to determine $\crg(G \Box P_n)$ for all 21 non-isomorphic connected graphs $G$ of order five, which was finally completed in 2001. In the two decades since, significant efforts have been made by multiple authors (including Kle\v{s}\v{c}) to extend these results to the 112 non-isomorphic connected graphs of order six. Currently, slightly less than half of these have been resolved; the progress is chronicled in \cite{clancyetal2019}.

In order to determine $\crg(G \Box P_n)$ for a given graph $G$, one needs to establish lower and upper bounds, and show that they coincide. Establishing a valid upper bound is generally straight-forward, and is usually either achieved by providing a drawing procedure which results in the desired number of crossings, or else uses an existing result and the monotonicity property $\crg(G \Box P_n) \leq cr (H \Box P_n)$ if $G$ is a subgraph of $H$. Establishing a lower bound is typically much more complicated, usually involving ad hoc arguments specific to the graph in question. Due to this ad hoc nature, it is not uncommon for publications to focus on a single graph $G$ and determine $\crg(G \Box P_n)$ for that one case. Even in papers which focus on several graphs, often the complicated arguments are needed for only one or two graphs, and the remaining results follow as corollaries.

In this paper, we propose a new approach to determining, $\crg(G \Box P_n)$ which can be applied to any graph $G$. There are two possible outcomes to this approach; either the required lower bound is established, or else nothing is established. We will demonstrate that the approach is successful in resolving 128 of the 133 non-isomorphic connected graphs of orders five or six, including 60 cases which were hitherto unresolved. The approach we describe requires a modified version of CNP to be solved for $G \Box P_d$, where $d$ is a small number (typically $d = 2$ or $d = 3$ suffices). As such, this approach is tractable for sufficiently small graphs $G$.

The remainder of this paper is laid out as follows. In \Cref{sec-form} we introduce the modified version of CNP, which we call {\em binary-weighted capacity-constrained CNP} (BCCNP). In \Cref{sec-approach2} we describe the approach which, when successful, uses this modified version of CNP to obtain the required lower bounds for $\crg(G \Box P_n)$. In \Cref{sec-upperbounds} we berifly discuss the manner in which we will establish the required upper bounds and base cases, and also the manner in which we solve instances of BCCNP. In \Cref{sec-calculations} we report on calculations which test the efficacy of the approach, and show that we are able to determine $\crg(G \Box P_n)$ for 60 new graphs $G$ on six vertices, and also explore its efficacy on graphs $G$ with seven vertices. Finally, we conclude the paper in \Cref{sec-conclusions}.

\section{Binary-weighted capacity-constrained CNP (BCCNP)}\label{sec-form}

For a given graph $G$, denote by $C(G) \subseteq E(G) \times E(G)$ the set of all possible pairs of edges between which a crossing may occur in a good drawing. That is, $C(G)$ contains all pairs of non-adjacent edges.
Then, for a good drawing $D$ of $G$, and $c \in C(G)$, define a binary variable $x^D_c$ to be equal to 1 if crossing $c$ occurs in $D$, and 0 otherwise. CNP is then equivalent to
\[\min_D \;\;\sum_{c \in C(G)} x^D_c\mbox{, over all good drawings }D.\]

The above formulation considers every possible crossing that could occur in $D$ and then counts those that do. However, suppose that we additionally define some subset $B \subseteq C(G)$, and are only interested in good drawings that minimize the crossings in $B$:
\[\min_D \;\;\sum_{c \in B} x^D_c\mbox{, over all good drawings }D.\]
This is equivalent to the {\em weighted CNP} (cf.\ Schaefer \cite{schaefersurvey}) when all the weights are binary.

Another variant that could be considered would be to add some additional constraints to the formulation. Suppose that we have a family $\crsubset=\{\crsubset_i\}_i$ which contains subsets $\crsubset_i \subseteq C(G)$, and an $|\crsubset|$-dimensional vector $\capacities = (\capacity)$ of non-negative integer capacities. Then a set of constraints of the form 
$\sum_{c \in \crsubset_i} x^D_c \leq \capacity,$
for all $1 \leq i \leq |\crsubset|$, could be imposed: we only allow good drawings that, for each $i$, contain no more than $\capacity$ crossings from the subset $\crsubset_i$. We say  $\crsubset_i$ has a {\em maximum capacity} of $\capacity$.

The two concepts above can be combined together; we refer to the resulting problem as {\em binary-weighted capacity-constrained CNP} (BCCNP), formulated as:
\begin{alignat}{2}
	\mathrlap{\min_D \sum_{c \in B} x^D_c}\qquad \notag\\
    \mbox{s.t.\ }&D\mbox{ is a good drawing, and}\notag\\
	&\sum_{c \in \crsubset_i} x^D_c \leq \capacity  &&\forall i = 1, \hdots, |\crsubset|\label{eq:capconstr}
\end{alignat}

An instance of BCCNP requires us to specify not only the graph $G$, but also the subset~$B$, the family~$\crsubset \subseteq 2^{C(G)}$ of subsets, and the capacity vector $\capacities$. Thus, let $\crfunction (G,B,\crsubset,\capacities)$ denote the solution of a BCCNP instance. Naturally, $\crfunction (G,C(G),\emptyset,\emptyset)$ yields the standard CNP.
BCCNP has potential applications in its own right.
\delConf{For instance, one could imagine designing some kind of network where there is a maximum number of crossings that can be accommodated (capacities) in particular sections of the network, corresponding to the subsets in $\crsubset$. As long as these capacities are not exceeded, the resulting crossings are only problematic or expensive in certain parts of the network, corresponding to those in~$B$.}However, our reason for defining %
it
is that it will assist us in determining the crossing numbers for various families of graphs.

\section{Using BCCNP to determine crossing numbers} \label{sec-approach2}

Suppose that we have a graph $G$, and that we have reason to believe there are integers $s,a,b$ (with $s,a>0$) such that $\crg(G \Box P_n) = an - b$ for $n \geq s$. Certainly this is the case for every established result to date \cite{clancyetal2019}. Furthermore, suppose that we already possess a proof that this formula holds for $n = s, s+1, \hdots, t$ for some integer $t$, and also that we have established the corresponding upper bound $\crg(G \Box P_n) \leq an - b$ for $n \geq s$. Then all that remains is to determine the lower bound, $\crg(G \Box P_n) \geq an - b$ for $n > t$.

We begin by giving some definitions and notations that will be useful in the upcoming discussion. Note that the graph $G \Box P_n$ contains $n+1$ copies of $G$, with consecutive copies linked together by some edges. Denote the copies of $G$ as $G^0$, $G^1$, $\hdots$, $G^n$, each with edges $E(G^i)$. We call remaining edges \emph{path edges}; those linking $G^i$ to $G^{i+1}$ form the subgraph $H^i$, for $i = 0, \hdots, n-1$. See \cref{fig:cartesianproduct} for an example.

\adrawingFig

\begin{definition}
Consider a positive integer $a$. A good drawing of $G \Box P_n$ is \emph{\arestricted} (cf.\ \cref{fig:adrawing}) if each copy of $G$ has fewer than $a$ crossings on its edges.
\end{definition}
For sets of crossings, we use the shorthand $H' \crosstimes H'' \coloneqq E(H') \times E(H'')$ and define \[\crfunction_{a} (G,d,B) \coloneqq \crfunction (G\Box P_d, B, \Big\{G^i \crosstimes G \Box P_d\Big\}_{0 \leq i \leq d}, (a-1)\cdot {\bf 1}),\] which is the minimum number of crossings on the set of edge pairs $B$ over all \adrawing s of $G \Box P_d$.

\begin{lemma}Suppose that there is an integer $n \geq 2$ such that $\crg(G \Box P_{n-1}) \geq a(n-1) - b$, but $\crg(G \Box P_n) < an - b$. Then every crossing-minimal drawing of $G \Box P_n$ is \arestricted.\label{lem-adrawing}\end{lemma}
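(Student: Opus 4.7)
The plan is to argue by contradiction. Suppose that $D$ is a crossing-minimal drawing of $G \Box P_n$ that is not $a$-restricted; then by definition there exists some copy $G^i$ (with $0 \le i \le n$) incident to at least $a$ crossings of $D$, where I count any crossing in which at least one of the two participating edges lies in $E(G^i)$.

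From $D$ I would construct a drawing $D'$ of $G \Box P_{n-1}$ as follows. Delete the vertices and edges of $G^i$. If $1 \le i \le n-1$, then for each vertex $v$ of $G$ the two path edges $v^{i-1}v^i \in H^{i-1}$ and $v^iv^{i+1} \in H^i$ meet at $v^i$; since $D$ is good, these two adjacent edges do not cross each other, so concatenating them at $v^i$ yields a simple arc from $v^{i-1}$ to $v^{i+1}$, and performing this for every $v$ produces one fresh copy of path edges joining $G^{i-1}$ to $G^{i+1}$. For the boundary cases $i \in \{0,n\}$, I would instead simply also delete the unique incident set of path edges ($H^0$ or $H^{n-1}$), with no concatenation required. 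In every case, the remaining picture is a drawing $D'$ of $G \Box P_{n-1}$ (after relabeling) on $n$ copies of $G$ linked by $n-1$ sets of path edges.

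To bound the crossings in $D'$, I would observe that each crossing of $D'$ descends from a crossing of $D$, while any crossing of $D$ that disappears during the construction must involve a deleted edge of $G^i$. The concatenation step itself introduces no new crossings, because the crossings of a merged arc with any other edge are exactly the union of the crossings of its two constituent path edges. Therefore $\crg_{D'}(G \Box P_{n-1}) \le \crg_D(G \Box P_n) - a$. Combining this with the assumption $\crg_D(G \Box P_n) = \crg(G \Box P_n) < an - b$ yields $\crg(G \Box P_{n-1}) \le \crg_{D'}(G \Box P_{n-1}) < a(n-1) - b$, which contradicts the hypothesis $\crg(G \Box P_{n-1}) \ge a(n-1) - b$.

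The only genuinely delicate point is verifying that the concatenation operation is crossing-preserving; the rest is bookkeeping. Note that $D'$ need not be a good drawing (two merged arcs could cross twice, or formerly non-adjacent edges could become adjacent), but any such defects can only be repaired without increasing the crossing count, so the inequality $\crg(G \Box P_{n-1}) \le \crg_{D'}(G \Box P_{n-1})$ remains safe and the argument goes through.
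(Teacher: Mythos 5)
Your proof is correct and follows essentially the same route as the paper: identify a copy $G^i$ with at least $a$ crossings on its edges, delete it to obtain a drawing of (a graph homeomorphic to, or containing) $G \Box P_{n-1}$ with fewer than $a(n-1)-b$ crossings, and derive a contradiction. You simply make explicit the suppression of degree-2 vertices and the goodness caveat that the paper leaves implicit.
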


\begin{proof}Suppose that there is a crossing-minimal drawing $D_n$ of $G \Box P_n$ which is not \arestricted. Recall that $G \Box P_n$ has $n+1$ copies of $G$. Since $D_n$ is not \arestricted, at least one of those copies, say $G^i$, must have $a$ or more crossings on its edges. By deleting the edges $E(G^i)$ from the drawing, we obtain a drawing of a new graph with fewer than $a(n-1) - b$ crossings. However, since this new drawing is of a graph that is either homeomorphic to $G \Box P_{n-1}$, or else contains $G \Box P_{n-1}$ as a subgraph, this violates the initial assumption.
\delConf{Hence, all crossing-minimal drawings of $G \Box P_n$ must be \arestricted.}
\end{proof}

Now, consider the graph $G \Box P_d$ for some $d \geq 2$.
\delConf{In a good drawing for this graph, the set of all possible edge-crossings is $C(G \Box P_d)$.}
The arguably most natural approach when discussing Cartesian products with paths is to try to consider 
the graph ``copy-wise'', i.e., look at the graph in chunks of subgraphs $G_i\cup H_i$, possibly akin to how we define our $a$-restrictions. However, it turns out that a different way to group crossings is more useful in our proofs.
We partition the possible crossings $C(G \Box P_d)$ into $d+1$ {\em crossing bands} $A^d_i$: %
\[
        A^d_i \coloneqq \displaystyle\bigcup_{j = 0}^{2i} \Big((G^{j} \cup H^{j}) \crosstimes (G^{2i-j-1} \cup H^{2i-j-1} \cup G^{2i-j})\Big) \cup \Big(H^{j} \crosstimes H^{2i-j-2}\Big)\mbox{, for } i \in \{0, \hdots, d\},
\]
where any sets with superscripts less than zero or larger than $d$ are treated as empty sets. The partition of crossings into the crossing bands is visualized in \cref{tbl-force}. Observe that any given crossing in a good drawing of $G \Box P_d$ belongs to exactly one crossing band. For the sake of convenience, in the following discussions we will often refer to a drawing as having crossing bands; technically, this refers to the crossing bands of the underlying graph.

\begin{figure}
	\centering
	\scalebox{0.83}{
		\crossingBandsTbl
	}
	\caption{Crossing bands $A^d_i$ for $G\Box P_d$, visually highlighted as colored diagonal stripes. A colored cell represents crossings between edges of the subgraphs corresponding to the row and the column, respectively. The number ``$i$'' in a cell indicates that these crossings are contained in~$A^d_i$. Lighter shaded cells indicate redundant associations. In contrast, the restriction of an \adrawing~on copy $G^i$ sums over all crossings in row and column $G^i$; this is marked in bold black lines for~$G^3$. Note that the latter contains a very different set of crossings than does $A^d_3$ (blue cells labeled ``3'').}
	\label{tbl-force}
\end{figure}

We will show that each crossing band contributes a specific minimum number of crossings to the overall crossing number of $G\Box P_d$. Intuitively speaking, one may think about a natural drawing, like \Cref{fig:cartesianproduct}: Essentially each $G$-copy is drawn identically and requires a certain number of crossings, except for the left- and right-most $G$-copy (``front'' and ``end'') which may require less crossings. However, for more complex $G$, the special front and end may consist of multiple $G$-copies each. All path-products for which the crossing number is known allow such a ``natural'' optimal drawing, however, our proof may of course not assume that this would always be the case. Nonetheless, this pattern allows us to establish certain minimal numbers of crossings that are \emph{forced} along the crossing bands.

	Let $d \geq 2$ and $0 < m < d$.	We use the term {\em middle force} to refer to the minimum possible number of crossings from $A^d_m$ in any \adrawing~of $G \Box P_d$. Likewise, the term \emph{front force} (\emph{end force}) refers to the minimum possible number of crossings from the first $m$ (final $d-m$, respectively) crossing bands in any \adrawing~of $G \Box P_d$
	(cf.\ \cref{fig:forces}):
	\[
		\begin{array}{lllccc}
			\emph{front force}\phantom{spaceee} & \forceF & \coloneqq & \crfunction_{a}(G, d, & A^d_0 \cup \dots \cup  A^d_{m-1} & ), \\[.5em]
			\emph{middle force}\quad            & \forceM & \coloneqq & \crfunction_{a}(G, d, & A^d_m                            & ), \\[.5em]
			\emph{end force}\quad               & \forceE & \coloneqq & \crfunction_{a}(G, d, & A^d_{m+1} \cup \dots \cup  A^d_d & ).
		\end{array}
	\]

	\begin{figure}[tb]
		\forcesFig
		\caption{Crossing bands considered in a (plus-)force triple: \textsf{\textbf{(left)}} front, \textsf{\textbf{(right)}} middle and end.}
		\label{fig:forces}
	\end{figure}

	Together they define the 
 \emph{\hexaforceName}~$(\forceF, \forceM, \forceE)$.
    The rationale for considering the \hexaforceName~is revealed in the following lemma.

    \begin{lemma}Let $(\forceF, \forceM, \forceE)$ be a \hexaforceName~of $G$. Let $D_n$, with $n \geq d$,  be an \adrawing~of $G \Box P_n$. Then there are
    \begin{enumerate}\item[(a)] at least $\forceF$ crossings in $D_n$ from the first $m$ crossings bands; 
    \item[(b)] at least $\forceM$ crossings in $D_n$ from the $(m+i)$-th crossing band for $i \in \{0, \hdots, n-d\}$; and
    \item[(c)] at least $\forceE$ crossings in $D_n$ from the final $d-m$ crossing bands.\end{enumerate}\label{lem-forces}\end{lemma}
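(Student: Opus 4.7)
The plan is to prove all three parts by the same mechanism: given an \adrawing{} $D_n$ of $G\Box P_n$, I would extract, for each of the three cases, an appropriately chosen induced sub-drawing of $G\Box P_d$, verify that this sub-drawing is itself an \adrawing, and then match the crossings of interest in $D_n$ to crossings in one of the three ``force'' regions of that sub-drawing. The definition of $\forceF$, $\forceM$, $\forceE$ as minima over all \adrawing s of $G\Box P_d$ then immediately yields each bound.

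More concretely, for a choice of $d+1$ consecutive copies $G^{j}, G^{j+1}, \ldots, G^{j+d}$ in $D_n$ (with their connecting path-edge subgraphs $H^j, \ldots, H^{j+d-1}$), I would define $D^{(j)}$ to be the drawing of $G\Box P_d$ obtained by restricting $D_n$ to exactly these edges. The first thing to check is that $D^{(j)}$ is a good drawing: adjacency of edges is preserved and pairs of edges still cross at most once. Then I would observe that $D^{(j)}$ inherits the \arestricted{} property, because each copy of $G$ in $D^{(j)}$ is an exact copy of some $G^{j+k}$ in $D_n$, whose own edges are involved in fewer than $a$ crossings in $D_n$ and hence in no more crossings in the restriction $D^{(j)}$. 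Thus $D^{(j)}$ is a valid \adrawing{} of $G\Box P_d$ and must satisfy the force bounds associated with any \hexaforceName{} of $G$.

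The final ingredient is a bookkeeping statement: the $k$-th crossing band $A^d_k$ of $D^{(j)}$ coincides (as a set of edge-pairs in $D_n$) with a block of crossings inside a specific crossing band of $D_n$, namely the $(j+k)$-th one (this is clear from the explicit definition of $A^d_i$, which only involves the subgraphs $G^l$ and $H^l$ in a narrow diagonal window). So:
\begin{itemize}
\item For part (a), take $j = 0$; the first $m$ crossing bands of $D^{(0)}$ sit inside the first $m$ crossing bands of $D_n$, so the latter contain at least $\forceF$ crossings.
\item For part (b), take $j = i$ for each $i\in\{0,\ldots,n-d\}$; the $m$-th crossing band of $D^{(i)}$ equals the restriction of the $(m+i)$-th crossing band of $D_n$, which therefore contains at least $\forceM$ crossings.
\item For part (c), take $j = n - d$; the final $d-m$ crossing bands of $D^{(n-d)}$ are contained in the final $d-m$ crossing bands of $D_n$, giving at least $\forceE$ crossings.
\end{itemize}

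The main obstacle I expect is the bookkeeping in that last step: one has to verify carefully from the explicit formula for $A^d_i$ that, after re-indexing copies by the shift $j$, the edge pairs labelled $k$ in $D^{(j)}$ really correspond to edge pairs labelled $j+k$ in $D_n$, and that no crossings of $D_n$ from ``outside'' the chosen window are accidentally counted. Beyond that, the argument is essentially a ``delete surplus copies, observe the restriction is still an \adrawing'' move, exactly analogous to the subgraph-deletion trick used in \cref{lem-adrawing}.
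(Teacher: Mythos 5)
Your proposal is correct and follows essentially the same route as the paper: the paper likewise defines the $n-d+1$ subdrawings $D_n^j$ on copies $G^j,\dots,G^{j+d}$, notes each inherits the \arestricted{} property, and applies the definitions of $\forceF$, $\forceM$, $\forceE$ to $D_n^0$, $D_n^i$, and $D_n^{n-d}$ respectively, using the containment of the subdrawing's crossing bands in the (shifted) crossing bands of $D_n$. Your extra care about the index-shift bookkeeping for the bands $A^d_k$ is a point the paper passes over more quickly, but the argument is the same.
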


    \begin{proof}First, note that $D_n$ contains $n-d+1$ subdrawings of $G \Box P_d$. In particular, we denote the $j$-th subdrawing as $D_n^j$, for $j \in \{0, \hdots, n-d\}$; it is the one containing copies $G^{j}$ through to $G^{j+d}$. Since $D_n$ is an \adrawing, each of these subdrawings is also \arestricted. We can obtain lower bounds on how many crossings in $D_n$ come from each crossing band, by looking at the forced number of crossings in corresponding subdrawings.

    Consider $D_n^{0}$, and recall that it is \arestricted. Then, by definition, the number of crossings in $D_n^{0}$ from its first $m$ crossing bands is at least $\forceF$. However, the set of potential crossings contained in its first $m$ crossing bands is a strict subset of the potential crossings contained in the first $m$ crossing bands of $D_n$. Hence, item (a) is true. Analogous arguments hold for item (b) by considering the $m$-th crossing bands in $D_n^{i}$ for $i \in \{0, \hdots, n-d\}$, and for item (c) by considering the final $d-m$ crossing bands in $D_n^{n-d}$.\end{proof}

    \Cref{lem-adrawing,lem-forces} lead to our first main statement.

    \begin{theorem}
		\label{cor-force}
		Let $(\forceF, \forceM, \forceE)$ be a \hexaforceName~of $G$. %
  If
	\begin{align*}
		\crg(G \Box P_{d-1}) &\geq a(d-1) - b,&&&%
		\forceF + \forceE &\geq a(d-1) - b,&&\text{ and}&\forceM &\geq a,
	\end{align*}
	then for all $n \geq d$ we also have
	\[\crg(G \Box P_n) \geq an - b.\]
 \end{theorem}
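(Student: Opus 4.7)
The strategy is a straightforward induction on $n$, using the assumed bound $\crg(G \Box P_{d-1}) \geq a(d-1) - b$ as the base case (applicable when $n=d$) and stepping upward. In the inductive step I would assume $\crg(G \Box P_{n-1}) \geq a(n-1) - b$ for some $n \geq d$, and argue by contradiction: suppose $\crg(G \Box P_n) < an - b$. Then the hypotheses of Lemma~\ref{lem-adrawing} are satisfied, so every crossing-minimal drawing $D_n$ of $G \Box P_n$ is \arestricted.

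I then fix such a drawing $D_n$ and analyze its crossings band-by-band using Lemma~\ref{lem-forces}. The key structural fact (already noted in the excerpt) is that the $n+1$ crossing bands $A^n_0, \dots, A^n_n$ partition $C(G \Box P_n)$, so the crossings of $D_n$ split disjointly across them. Lemma~\ref{lem-forces} then guarantees at least $\forceF$ crossings among the first $m$ bands, at least $\forceM$ crossings in each of the $n-d+1$ middle bands $A^n_m, A^n_{m+1}, \dots, A^n_{m+n-d}$, and at least $\forceE$ crossings among the final $d-m$ bands. These three groups of bands are disjoint and cover all $n+1$ bands, since $m + (n-d+1) + (d-m) = n+1$.

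Summing the band-wise lower bounds yields
\[
\crg(G \Box P_n) \;\geq\; \forceF + (n-d+1)\forceM + \forceE.
\]
Applying the two remaining hypotheses, $\forceF + \forceE \geq a(d-1) - b$ and $\forceM \geq a$, simplifies the right-hand side to $a(d-1) - b + (n-d+1)a = an - b$, contradicting the assumption $\crg(G \Box P_n) < an - b$. This closes the inductive step.

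The proof is essentially bookkeeping once both preparatory lemmas are on the table, so there is no deep obstacle. The only care required is (i) confirming that the three groups of bands named in Lemma~\ref{lem-forces} are mutually disjoint and jointly exhaust every band of $G \Box P_n$, which reduces to the counting identity above together with the partition property of the $A^n_i$; and (ii) launching the induction correctly at $n=d$, where the hypothesis $\crg(G \Box P_{d-1}) \geq a(d-1) - b$ supplies the base case needed to invoke Lemma~\ref{lem-adrawing}.
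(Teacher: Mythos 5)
Your proof is correct and follows essentially the same route as the paper: a minimal-counterexample/induction argument that invokes Lemma~\ref{lem-adrawing} to get an $a$-restricted crossing-minimal drawing and then sums the band-wise lower bounds from Lemma~\ref{lem-forces} to reach $\forceF + (n-d+1)\forceM + \forceE \geq an - b$, a contradiction. The disjointness and coverage check you flag is a worthwhile (if routine) addition that the paper leaves implicit.
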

    \begin{proof}Suppose that there is some value $n \geq d$ such that $\crg(G \Box P_{n-1}) \geq a(n-1) - b$, but $\crg(G \Box P_n) < an - b$. Let $D_n$ be a crossing-minimal drawing of $G \Box P_n$. By \Cref{lem-adrawing}, $D_n$ is \arestricted, and we note that it has fewer than $an - b$ crossings. Then, by \Cref{lem-forces}, $D_n$ must contain at least $\forceF + \forceE + (n-d+1)\forceM \geq an - b$ crossings, contradicting our initial assumption and completing the proof.\end{proof}

    As will be demonstrated in \Cref{sec-calculations}, \Cref{cor-force} is already sufficient to provide the desired lower bounds in several cases. However, in many cases the \hexaforceName~values are not large enough to meet the conditions of \Cref{cor-force}. In \Cref{sec-force} we briefly discuss why this might occur, and then propose a more sophisticated way of considering the forces which leads to significantly improved results.

    \subsection{Plus-forces and star-forces\label{sec-force}}

    The above results are obtained by partitioning all possible crossings of $G \Box P_n$ into various crossing bands, and then considering lower bounds on the number of crossings from those crossing bands. If each of these individual lower bounds is sufficiently large, we obtain the desired result. However, in practice, it is often possible to draw a graph such that there are relatively few crossings in one crossing band, at the cost of a higher number of crossings in nearby crossing bands.
    \Cref{cor-force} does not take this into account and pessimistically assumes that the minima for each crossing band could be attained simultaneously. Whenever this is not the case, the conditions of \Cref{cor-force} are not met and it cannot be applied.

    To remedy this, we propose a more sophisticated way of measuring forces. When calculating the above force values by solving a BCCNP instance, the objective function only considers crossings from certain crossing band(s); except for insisting on an \adrawing, there are no further restrictions on the other crossing bands. 
    This leads to the issue mentioned above, where additional crossings are able to ``hide'' in the other, largely unrestricted, crossing bands. With this in mind, we introduce {\em plus-forces}, which are illustrated in \cref{fig:forces}. Each plus-force is defined the same as its corresponding force, but with additional restrictions that are placed on the number of crossings in other crossing bands. In particular, for the middle and end plus-forces, restrictions are placed on exactly one, adjacent, crossing band. For the front plus-force, restrictions are placed on $\numFPF$ crossing bands, for $\numFPF \in \{0, \hdots, d-m+1\}$. Hence, we use the terms $\forcePlusF, \forcePlusM, \forcePlusE$ to denote the front, middle, and end plus-forces, respectively. We again require $d\geq 2$ and $0<m<d$:

    The \emph{middle plus-force} $\forcePlusM$ is the minimum number of crossings from $A^d_m$ in any \adrawing, such that there are at most $a$ crossings from $A^d_{m+1}$. This can be computed analogously to $\forceM$, by simply adding the one extra capacity constraint
$\sum_{c \in A^d_{m+1}} x^D_c \leq a.$
    
	The \emph{end plus-force} $\forcePlusE$ is the minimum number of crossings from the final $d-m$ crossing bands in any \adrawing, such 
 that there are at most $a-1$ crossings from $A^d_m$. This can be computed in the same way as $\forceE$ by adding
$\sum_{c \in A^d_{m}} x^D_c \leq a-1$. We specifically clarify that the right-hand side here is really $a-1$, not $a$ as in the middle plus-force, or a multiple of $a$ as in the front plus-force below.

For the \emph{front plus-force} $\forcePlusF$, we need to specify $\numFPF \in \{0, \hdots, d-m+1\}$, which is the number of crossing bands (starting from $A^d_m$) that restrictions should be placed on. Then, $\forcePlusF$ is the minimum number of crossings from the first $m$ crossing bands in any \adrawing, such
that, for every $r \in \{0,1,\hdots,\numFPF-1\}$, there are at most $(r+1)\cdot a$ crossings in $\bigcup_{i = m}^{m+r} A^d_i$. Hence, $\forcePlusF$ can be computed in the same way we compute $\forceF$ by adding $\numFPF$ additional capacity constraints:
	\begin{align*}
		\sum_{i = m}^{m+r} \sum_{c \in A^d_{i}} x^D_c \leq (r+1) \cdot a,&&&\forall r \in \{0,1, \hdots, \numFPF-1\}.
	\end{align*}
Note that if $\numFPF = 0$, no additional restrictions are placed, and hence $\forcePlus^{<m}_{d,a}(0) = \forceF$.

    Since the plus-forces are simply more restricted versions of the forces, each plus-force is at least as large as its corresponding force.
    Whether or not they coincide tells us something about \adrawing s of the underlying graph.
    For instance, if $\forceM < \forcePlusM$, it tells us that in order for an \adrawing~of $G \Box P_d$ to have only $\forceM$ crossings from $A^d_m$, it must have more than $a$ crossings from $A^d_{m+1}$.
    To capture this kind of information, we define the following {\em star-forces}:
 \[
		\begin{array}{llllrlrr}
			\emph{front star-force}\phantom{spaceee} & \forceStarF & \coloneqq \min \{ & \forcePlusF & ,    & ~\forceF & +\ 1 & \} \\[.5em]
			\emph{middle star-force}\quad            & \forceStarM & \coloneqq \min \{ & \forcePlusM & ,    & ~\forceM & +\ 1 & \} \\[.5em]
			\emph{end star-force}\quad               & \forceStarE & \coloneqq \min \{ & \forcePlusE & -\ 1\ , & ~\forceE &     & \}
		\end{array}
   \]

\begin{lemma}Let $(\forceStarF, \forceStarM, \forceStarE)$ be a \hexaforceStarName~of $G$, and suppose that $\forceStarM \geq a$. Let $D_n$ be an \adrawing~of $G \Box P_n$ for $n \geq d$. Let $z=1$ if $D_n$ contains exactly $a-1$ crossings from $A^n_{n-d+m}$, and $z=0$ otherwise. Then:
\begin{enumerate}\item[(a)] $D_n$ contains at least $\forceStarF - 1$ crossings from the first $m$ crossing bands;
\item[(b)] $D_n$ contains at least $a-1$ crossings from $A^n_{i+m}$ for $i \in \{0, \hdots, n-d\}$;
\item[(c)] if $D_n$ contains exactly $a-1$ crossings from $A^n_{i+m}$ for $i \in \{0, \hdots, n-d-1\}$, then $D_n$ contains at least $a+1$ crossings from $A^n_{i+m+1}$; and
\item[(d)] $D_n$ contains at least $\forceStarE + z$ crossings from the final $d - m$ crossing bands.
\end{enumerate}\label{thm-smallresults}\end{lemma}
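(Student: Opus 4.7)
The plan is to adapt the subdrawing technique from \Cref{lem-forces}. For each $j \in \{0, \ldots, n-d\}$, let $D_n^j$ denote the subdrawing of $D_n$ on copies $G^j, \ldots, G^{j+d}$; it is an \adrawing~of $G \Box P_d$, and its local band $A^d_k$ is exactly the global band $A^n_{j+k}$ of $D_n$. Each of the four items will be established by applying the BCCNP definition underlying one of the (plus-)forces to a suitably chosen $D_n^j$ and then translating the conclusion back via this band-index correspondence.

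For (a) and (b) the plus-forces are not needed. Item (a) uses that $\forceStarF \leq \forceF + 1$ by definition, so $\forceStarF - 1 \leq \forceF$; applying the standard front force to $D_n^0$ gives at least $\forceF$ crossings in its first $m$ bands, which coincide with the first $m$ bands of $D_n$. Item (b) uses that $\forceStarM \leq \forceM + 1$, so the hypothesis $\forceStarM \geq a$ implies $\forceM \geq a-1$; applying the middle force to each $D_n^i$ then bounds the number of crossings in $A^n_{i+m}$ from below by $\forceM \geq a-1$.

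Item (c) is where the plus-force definition does real work. Fix $i \in \{0, \ldots, n-d-1\}$ and assume $A^n_{i+m}$ carries exactly $a-1$ crossings; argue by contradiction, supposing $A^n_{i+m+1}$ contains at most $a$ crossings. Under that assumption $D_n^i$ is \arestricted~and additionally satisfies the extra capacity constraint $\sum_{c \in A^d_{m+1}} x^D_c \leq a$ appearing in the definition of $\forcePlusM$. Hence the band $A^d_m$ of $D_n^i$, i.e.\ $A^n_{i+m}$, must contain at least $\forcePlusM$ crossings. But $\forceStarM = \min\{\forcePlusM, \forceM + 1\} \geq a$ forces $\forcePlusM \geq a$, contradicting the assumption that $A^n_{i+m}$ holds only $a-1$ crossings.

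Finally, for item (d), apply the end forces to $D_n^{n-d}$, whose final $d-m$ bands coincide with the final $d-m$ bands of $D_n$. If $z = 0$, the standard end force applied to $D_n^{n-d}$ already gives at least $\forceE \geq \forceStarE$ crossings there, which is exactly $\forceStarE + z$. If $z = 1$, then the band $A^n_{n-d+m}$, which is the $m$-th band $A^d_m$ of $D_n^{n-d}$, carries exactly $a-1$ crossings, so the extra constraint $\sum_{c \in A^d_m} x^D_c \leq a-1$ defining $\forcePlusE$ is met; hence $D_n^{n-d}$ contributes at least $\forcePlusE$ crossings in its final $d-m$ bands, and because $\forceStarE \leq \forcePlusE - 1$ this is at least $\forceStarE + 1 = \forceStarE + z$. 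The main technical obstacle throughout is the bookkeeping between a subdrawing's bands $A^d_k$ and the corresponding $A^n_{j+k}$ in $D_n$, and verifying in each invocation that the particular plus-force's extra capacity constraint really holds for the chosen subdrawing; once this correspondence is nailed down, each item reduces to a one-line application of the relevant BCCNP definition.
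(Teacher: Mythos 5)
Your proposal is correct and follows essentially the same route as the paper's proof: each item is obtained by applying the relevant force or plus-force definition to an appropriate subdrawing $D_n^j$ and translating via the band-index shift, combined with the definitional inequalities $\forceStarF \leq \forceF+1$, $\forceStarM \leq \forceM+1$, $\forceStarE \leq \min\{\forceE, \forcePlusE-1\}$. Your derivations of (c) and (d) are phrased slightly more directly (e.g.\ reading $\forcePlusM \geq a$ straight off $\forceStarM \geq a$ rather than via $\forceM = \forceStarM - 1$), but the logical content is identical.
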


\begin{proof}As previously noted, $D_n$ contains $n - d + 1$ subdrawings of $G \Box P_d$, which we denote as $D_n^{j}$ for $j \in \{0, \hdots, n-d\}$. From \Cref{lem-forces}(a), we have that $D_n$ contains at least $\forceF$ crossings from the first $m$ crossing bands. From the definition of $\forceStarF$, it is clear that $\forceF \geq \forceStarF - 1$. Hence, item (a) is true.

By \Cref{lem-forces}(b), we have that $D_n$ contains at least $\forceM$ crossings from $A^n_{i+m}$ for $i \in \{0, \hdots, n-d\}$. Now consider $\forceStarM$, which by definition is either equal to $\forcePlusM$ or $\forceM + 1$. Since $\forcePlusM \geq \forceM$ and both are integers, there are only two possibilities; either $\forceM = \forceStarM$, or $\forceM = \forceStarM - 1$. Either way, item (b) is true. Now, suppose that $D_n$ contains exactly $a-1$ crossings from $A^n_{i+m}$, which is only possible if $\forceM = \forceStarM - 1 \geq a - 1$. This, in turn, is only possible if $\forcePlusM > \forceM$, and so $\forcePlusM \geq a$. According to the definition of $\forcePlusM$, it is hence impossible to draw $G \Box P_d$ with fewer than $a$ crossings from $A^d_m$ and fewer than $a+1$ crossings from $A^d_{m+1}$. However, in the subdrawing $D_n^{i}$ there are fewer than $a$ crossings from $A^d_m$. Hence, there must be at least $a+1$ crossings in $D_n^{i}$ from $A^d_{m+1}$, which implies that there are at least $a+1$ crossings in $D_n$ from $A^n_{i+m+1}$. Therefore item (c) is true.

Finally, by \Cref{lem-forces}(c), we have that $D_n$ contains at least $\forceE$ crossings from the final $d-m$ crossing bands. Note that by definition we have $\forceE \geq \forceStarE$, so if $z = 0$ then item (d) is true. Suppose that $z = 1$, and item (d) is false. This could only be the case if $\forceE = \forceStarE$, which implies that $D_n$ must have exactly $\forceStarE$ crossings from the final $d-m$ crossing bands, i.e., one crossing less than suggested by item (d). However, since $z = 1$, $D_n$ contains exactly $a-1$ crossings from $A^n_{n-d+m}$ by definition. Thus, the subdrawing $D_n^{n-d}$ must also have no more than $a-1$ crossings in its $m$-th crossing band, and since $D_n^{n-d}$ is an \adrawing, it must have at least $\forcePlusE$ crossings from its final $d-m$ crossing bands. This implies that $D_n$ must contain at least $\forcePlusE$ crossings from its final $d - m$ crossing bands. However, by definition we have $\forcePlusE \geq \forceStarE + 1 = \forceStarE + z$. Hence item (d) is true.\end{proof}

\Cref{thm-smallresults} enables us to establish a stronger version of \Cref{cor-force}. 

	\begin{theorem}
		\label{thm-force-mf-star}
		Let $(\forceStarF, \forceStarM, \forceStarE)$ be a \hexaforceStarName~of $G$ and $\numFPF' \coloneqq \max(\numFPF, 1)$.
		If
		\begin{align*}
			\crg(G \Box P_{d+\numFPF'-2}) &\geq a(d+\numFPF'-2) - b,&&&%
			\forceStarF + \forceStarE &\geq a(d-1) - b,&&\text{ and}&\forceStarM &\geq a
		\end{align*}
		then for $n \geq d+\numFPF'-1$,
		\[\crg(G \Box P_n) \geq an - b.\]
	\end{theorem}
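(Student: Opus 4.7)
The plan is to adapt the proof of \Cref{cor-force}, replacing \Cref{lem-forces} by \Cref{thm-smallresults} and grafting on a case split to close an off-by-one gap. I argue by contradiction: suppose some $n \geq d + \numFPF' - 1$ violates the conclusion, and fix the smallest such $n$. Then $\crg(G \Box P_{n-1}) \geq a(n-1) - b$, either by minimality (when $n > d + \numFPF' - 1$) or directly by the theorem's base hypothesis at $n - 1 = d + \numFPF' - 2$. Let $D_n$ be a crossing-minimal drawing; by \Cref{lem-adrawing} it is $a$-restricted. Denote by $F$, $E$, and $M_i$ (for $i \in \{0, \dots, n-d\}$) the numbers of crossings of $D_n$ in the first $m$ bands, the last $d - m$ bands, and band $A^n_{m+i}$, respectively.

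Applying \Cref{thm-smallresults} directly yields $F \geq \forceStarF - 1$, $E \geq \forceStarE + z$, $M_i \geq a-1$ for every $i$, and the jump rule $M_i = a-1 \Rightarrow M_{i+1} \geq a+1$ whenever $i \leq n-d-1$. A short pairing argument---two consecutive low bands are impossible, so each low band with $i < n-d$ is matched with a forced high successor at $i+1$, and only the last band (when $z=1$) lacks such a partner---gives $\sum_{i=0}^{n-d} M_i \geq a(n-d+1) - z$. Substituting into $\mathrm{Total} \geq F + \sum_i M_i + E$ and invoking $\forceStarF + \forceStarE \geq a(d-1) - b$ produces only $\mathrm{Total} \geq an - b - 1$. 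The remaining crossing must be recovered by exploiting the extra information encoded in $\forceStarF$ beyond $\forceF$, and this closing step is the main technical hurdle.

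The recovery proceeds by a case split on the subdrawing $D_n^0$ consisting of the first $d+1$ copies of $G$ in $D_n$, which inherits $a$-restriction from $D_n$. In \emph{Case~A}, $D_n^0$ satisfies all defining inequalities $\sum_{i=0}^{r} M_i \leq (r+1) a$ for $r \in \{0, \dots, \numFPF - 1\}$ (vacuous when $\numFPF = 0$). Since crossings of $D_n^0$ lying in $A^d_i$ form a subset of those of $D_n$ lying in $A^n_i$, this makes $D_n^0$ admissible for the BCCNP instance defining $\forcePlusF$, so $F \geq \forcePlusF \geq \forceStarF$; the extra unit closes the deficit and $\mathrm{Total} \geq an - b$. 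In \emph{Case~B}, some inequality is violated; choosing the smallest violating $r$ gives $\sum_{i=0}^{r} M_i \geq (r+1) a + 1$, and rerunning the pairing argument on the residual block $\{M_{r+1}, \dots, M_{n-d}\}$ (where the jump rule is still in force) yields $\sum_{i=r+1}^{n-d} M_i \geq a(n-d-r) - z$, so overall $\sum_{i=0}^{n-d} M_i \geq a(n-d+1) + 1 - z$. The gained $+1$ now absorbs the $-1$ loss from $F$, again giving $\mathrm{Total} \geq an - b$. Either case contradicts crossing-minimality of $D_n$, completing the proof.
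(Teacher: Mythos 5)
Your proof is correct and follows essentially the same route as the paper's: the same front/middle/end decomposition via \Cref{lem-adrawing} and \Cref{thm-smallresults}, the same preliminary count giving $an-b-1$, and the same use of the front plus-force capacity constraints to recover the missing crossing. The only difference is presentational: you split directly on whether the capacity constraints are satisfied by $D_n$, whereas the paper first deduces that the front count equals exactly $\forceStarF - 1$ and then infers a violated constraint --- logically the same dichotomy.
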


\begin{proof}
	Suppose that there is some value $n \geq d+\numFPF'-1$ such that $\crg(G \Box P_{n-1}) \geq a(n-1) - b$, but $\crg(G \Box P_n) < an - b$. Let $n$ be minimal with this property. Let $D_n$ be a crossing-minimal drawing of $G \Box P_n$, then $D_n$ contains fewer than $an - b$ crossings, and from \Cref{lem-adrawing} it is an \adrawing. We want to count the number of crossings in $D_n$, cf.\ \Cref{fig:mainproof-numcrossings}. We will use the term {\em front crossings} to refer to the crossings in $D_n$ from the first $m$ crossing bands, {\em end crossings} to refer to the crossings in $D_n$ from the final $d-m$ crossing bands, and {\em middle crossings} to refer to all remaining crossings of $D_n$. To assist in calculating these, let $\numCrossings_i^j$ be the number of crossings in $D_n$ from $A^n_{m+i} \cup \hdots \cup A^n_{m+j}$. To simplify the notation, let $\numCrossings_i := \numCrossings_i^{n-d}$. Note that the number of middle crossings in $D_n$ is equal to $\numCrossings_0$. We seek to compute a lower bound for $\numCrossings_{i}$ for $i \in \{0, \hdots, n-d\}$. As in \Cref{thm-smallresults} it will be convenient to set $z=1$ if $D_n$ has exactly $a-1$ crossings from $A^n_{n-d+m}$, and $z=0$ otherwise.

\begin{figure}
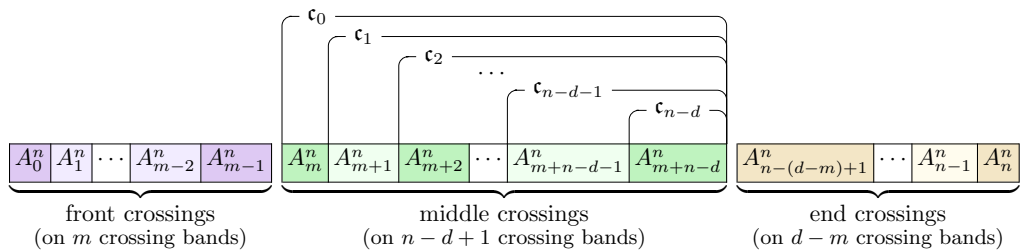

	\centering\noindent
	\scalebox{0.89}{
		\numcrossingsFig
	}
	\caption{Counting crossings in $D_n$.}
	\label{fig:mainproof-numcrossings}
\end{figure}

Recall from \Cref{thm-smallresults}(b) that $\numCrossings_{i}^{i} \geq a-1$, for $i \in \{0, \hdots, n-d\}$. If equality holds for any $i < n-d$, then from \Cref{thm-smallresults}(c) we know that $\numCrossings_{i}^{i+1} \geq (a-1) + (a+1) = 2a$. Inductively then, it is clear that $\numCrossings_{i}^{j} \geq (j-i+1)a - 1$ for $i,j \in \{0, \hdots, n-d\}$ and $i \leq j$, and if equality is met it implies that $D_n$ contains exactly $a-1$ crossings from $A^n_{j+m}$. Hence, by setting $j = n - d$, we obtain
\begin{equation}\numCrossings_{i} \geq (n+1-d-i)a - z, \qquad \forall i \in \{0, \hdots, n-d\}.\label{eq-C}\end{equation}

	In particular, setting $i = 0$ in \eqref{eq-C} tells us that there are at least $\numCrossings_0 = (n+1-d)a - z$ middle crossings, while \Cref{thm-smallresults}(a) and \Cref{thm-smallresults}(d) give us lower bounds on the front and end crossings respectively. Combining all of these, we can see that $D_n$ contains at least $(\forceStarF-1) + \numCrossings_0 + (\forceStarE + z) \geq an - b - 1$ crossings. Since by assumption $D_n$ has fewer than $an - b$ crossings, it must have exactly $an - b - 1$ crossings. In particular, $D_n$ has exactly $\forceStarF - 1$ front crossings. However, from the definition of $\forceStarF$, this is only possible if $\forcePlusF > \forceF$ and $\numFPF \geq 1$. Hence, $D_n$ must have fewer than $\forcePlusF$ front crossings, and so from the definition of \forcePlusF, there must be some positive integer $j \leq \numFPF$ such that $\numCrossings_0^{j-1} \geq aj+1$. Note that since $n \geq d+\numFPF'-1$, and $\numFPF \leq \numFPF'$, we have $j \leq n-d+1$. If $j = n-d+1$ then $\numCrossings_0 = \numCrossings_0^{j-1} \geq a(n-d+1) + 1$. Alternatively, if $j < n-d+1$, then from \eqref{eq-C} we have $\numCrossings_{j} \geq (n+1-d-j)a - z$, and so $\numCrossings_{0} = \numCrossings_{0}^{j-1} + \numCrossings_{j} \geq (n-d+1)a - z + 1$.Either way, adding $\numCrossings_0$ to $(\forceStarF - 1) + (\forceStarE + z)$ implies that $D_n$ contains at least $an - b$ crossings, contradicting our initial assumption and completing the proof.\end{proof}

As will be shown in \Cref{sec-calculations}, \Cref{thm-force-mf-star} is much stronger than \Cref{cor-force}, and will enable us to established the desired lower bounds for many more graph classes.

\section{Upper bounds, base cases, and computing forces by solving BCCNP}
\label{sec-upperbounds}
\subparagraph{Upper bounds.}
An important ingredient of the crossing number proofs is establishing an upper bound matching the lower bound. In practice, in the case of $G\Box P_n$, this turns out to be surprisingly simple, as simple enumeration schemes for simple drawings suffice. In fact, in most cases all but the very first and the very last copy of $G$ can be drawn identically, and these drawings are easy to figure out. Thus, we will not discuss this further in the following, but state that whenever we could prove a lower bound, we also identified a construction for the matching upper bound. The tables in the appendix show the corresponding subdrawings (and explain their interpretation).

\subparagraph{Base cases and force computation.}
To apply our above theorems, each graph class $G\Box P_n$ requires us to establish (a) $\crg(G\Box P_n)$ for small values of $n$ as the base cases, and (b) $G$-specific force values: sound proofs of BCCNP for small values $d,m$. Each thereby considered graph is relatively small (typically below $50$ edges), but since our goal is to solve many different graph classes, there are too many such instances to reasonably do them all by hand.

It was shown in Buchheim et al. \cite{buchheimetal2008} that CNP can be formulated as an integer linear program (ILP). Subsequently, Chimani et al. \cite{chimanietal2009,chimanietal2008} implemented this formulation as a practical algorithm, which requires sophisticated column generation techniques and other speed-up heuristics, to be tractable for ``real-world'' graphs with up to 100 edges.
As this implementation is generally not easy to understand and validate, Chimani and Wiedera \cite{chimaniwiedera2016} presented a proof and validation framework that extracts a crossing number proof from the ILP computation; this certificate can be independently validated (with comparably simple methods). This tool has since regularly been used to establish base cases or to validate results, e.g.,~\cite{clancyetal2019,bokaletal2022,clancyetal2020,bokalleanos2018,stastimkova2023,klescetal2017,stas2020,stas2020_2,huhnikkuni2024}.
The modifications necessary to tackle BCCNP instead of CNP are relatively straight-forward: analogously to the case of the \emph{simultaneous crossing number}~\cite{chimanietal2008_2}, when restricting the objective function to disregard certain crossings, these crossing variables are considered with a small enough coefficient $\varepsilon$ instead of $0$, to avoid issues with non-good drawings. The capacity constraints \eqref{eq:capconstr} can directly be added to the ILP computation and the proof validator as well. We call this algorithm the \emph{BCCNP solver}; all corresponding certificates for the crossing number proofs required below (as well as the proof validator), can be found at~\cite{results-webpage}.

\section{Calculations}\label{sec-calculations}

\Cref{cor-force,thm-force-mf-star} are only useful when the conditions are met, and checking if they are met requires us to compute multiple forces using the BCCNP solver for some setting of the parameters. Clearly, there are three possible outcomes to this approach. The first outcome is that the values computed by the BCCNP solver satisfy the requirements of at least one of the two theorems, and the desired result is obtained. The second outcome is that the values computed by the BCCNP solver do not satisfy the requirements of either theorem, and no result is obtained. The third outcome is that the BCCNP solver is unable to compute the values, as the graph is too large or complex to be tackled with reasonable computing power\footnote{All computations were conducted on a Intel Xeon Gold 6134 with a time limit of 2 hours.}.

To test the efficacy of the two proposed approaches, we ran them both on all 21 non-isomorphic connected graphs on five vertices, and all 112 non-isomorphic connected graphs on six vertices.
Since \Cref{cor-force,thm-force-mf-star} attempt to show that $\crg(G \Box P_n) \geq an - b$, we began by assuming that $\crg(G \Box P_n) = an - b$ for some integers $a, b$.
We computed $\crg(G \Box P_n)$ for some small values of $n$, and used these to predict the values of $a,b$. Then, we checked to see if they matched the upper bound yielded from our simple enumeration schemes. Indeed, in all tested cases, the values of $a,b$ matched.

Then, armed with an appropriate choice of $a,b$, we computed the forces for the $(d,m)$ pair $(2,1)$, and checked to see if they satisfied the relevant conditions for \cref{cor-force}. If not, we then proceeded to compute the plus-forces for the same $(d,m)$ pair and $\numFPF = d - m + 1$. Then, we checked if the relevant conditions of \cref{thm-force-mf-star} were satisfied first for $\numFPF = 0$ (by using 
$\forcePlus^{<m}_{d,a}(0) = \forceF$), and if not, then for $\numFPF = d - m + 1$. If any of these checks were successful, then we computed the relevant base case to settle the instance; note that checking for $\numFPF = 0$ first is worthwhile because the corresponding base case is considerably smaller than for $\numFPF = n - d + 1$. If none of the checks were successful, then we then repeated the above for the $(d,m)$ pairs $(3,1)$, and $(4,2)$, only declaring failure if each of these proved unsuccessful.

We summarize the results of these calculations in \cref{tab-results}.
In particular, we note that we obtained a successful result for all 21 graphs on five vertices, and 107 (out of 112) graphs on six vertices.
The successful instances include $60$ graphs on six vertices which, according to the recent survey by Clancy et al.\ \cite{clancyetal2019}, have not previously been handled in the literature\footnote{This includes instances which have not been handled at all in the literature, and also instances which have only been handled in journals or periodicals which Clancy et al. \cite{clancyetal2019} determined ``impose no peer review, or that which does occur is inadequate.'' They note that such results ``should be revisited and submitted to thorough peer review''. As such, we treat such results as new in this manuscript.}. In particular, we establish the following results:

\begin{table}
	\centering
	\begin{tabular}{>{\bfseries}r*{3}{lr@{ }>{\tiny}rl}l>{\boldmath}r>{\boldmath\tiny}r}
        \toprule
        \multicolumn{1}{l}{\boldmath$G\Box P_n$,}
        & \multicolumn{3}{c}{\bf \Cref{cor-force}}  &    
        & \multicolumn{3}{c}{\bf \cref{thm-force-mf-star}} &
        & \multicolumn{3}{c}{\bf \cref{thm-force-mf-star}} &
        & \multicolumn{3}{c}{\bf total} \\
        \multicolumn{1}{l}{where $G$ are}
        & \multicolumn{3}{c}{} && \multicolumn{3}{c}{with $\numFPF = 0$} && \multicolumn{4}{c}{with $\numFPF = d-m+1$}            & \multicolumn{3}{c}{} \\
        \midrule
        5-vertex graphs && $19$                                   & $90\%$ &&& $21$                                     & $100\%$ &&& $21$                                      & $100\%$ &&& $21$                   & $100\%$ \\
        6-vertex graphs && $91$                                   & $81\%$ &&& $104$                                    & $ 93\%$ &&& $105$                                     & $94\%$  &&& $107  $                & $96\%$  \\
        7-vertex graphs && $529$                                  & $62$\% &&& $573$                                    & $ 67$\% &&& $675$                                     & $ 79$\% &&& $676$                  & $ 79$\% \\
        \bottomrule
	\end{tabular}
	\caption{
		The number of instances (as well as percentage) for which \cref{cor-force,thm-force-mf-star} yield positive results for at least one $(d,m)$ pair $(2,1)$, $(3,1)$, or $(4,2)$.}
	\label{tab-results}
\end{table}

\begin{theorem}
Let $i$, in $G^6_i$, be the graph indices from \cite{clancyetal2019} to name all 6-vertex graphs\footnote{The index scheme from \cite{clancyetal2019} includes the 44 disconnected 6-vertex graphs, and hence goes up to 156.}.
	For large enough $n$ (in most cases $n \geq 1$), the crossing number $\crg(G^6_i \Box P_n)$ is given by: 

	{\begin{center}
    \begin{tabular}{>{$}W{l}{1.5cm}<{$}>{$}W{l}{1cm}<{$}>{$}W{r}{7cm}<{$}}
		\crg(G^6_i \Box P_n) & i \text{\ (new results)} & {\color{darkgray}\hspace{2cm} i \text{\ (previously established, reproved)}} \\\midrule
        0n+0 & \confI{25,40}\\
        1n-1 & \confI{26, 28, 41, 43, 46, 60}\\
        2n-4 & \confI{42}\\
        2n-2 & \newI{49,} \confI{27,29,44, 45, 47, 53, 54, 59, 63, 64, 66, 74, 77, 83, 94}\\
        2n+0 & \confI{61, 75, 85}\\
        3n-5 & \confI{62}\\
        3n-3 & \newI{67, 76, 78, 92,} \confI{51, 65, 70, 89, 90, 120}\\
        3n-1 & \newI{84, 95, 98, 110} \confI{68, 71, 86, 87, 91, 111} \\
        4n-4 & \confI{31, 72, 79, 80, 104, 113}\\
        4n-2 & \newI{82, 88, 99, 100, 101, 108, 115, 116, 118, 133}\\
        4n   & \newI{93, 109, 112, 121, 130, 137}\\
        5n-3 & \newI{81, 102, 106, 107, 114, 122, 123, 124, 127,} \confI{125}\\
        5n-1 & \newI{131, 138, 146}\\
        6n-4 & \newI{134}\\
        6n-2 & \newI{105, 126, 128, 129, 132, 140, 141, 143,} \confI{103}\\
        6n   & \newI{152}\\
        7n-1 & \newI{119, 135, 139, 145, 148}\\
        8n-2 & \newI{144, 147, 151}\\
        9n-3 & \newI{150}\\
        9n-1 & \newI{154}\\
        10n  & \newI{149, 153}\\
        12n  & \newI{155}\\
    \end{tabular}
\end{center}}
    The table includes all 112 connected 6-vertex graphs except for $i=31,48,73,142,156$.
\end{theorem}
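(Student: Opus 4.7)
The plan is to handle each of the 107 listed graphs $G^6_i$ as a separate instance of the machinery developed in \Cref{sec-approach2,sec-force,sec-upperbounds}, following the decision procedure summarized at the start of \Cref{sec-calculations}. For each $G = G^6_i$, the first step is to produce candidate values $a,b$ appearing in the table. We obtain these by computing $\crg(G \Box P_n)$ for a handful of small $n$ via the BCCNP solver with no capacity constraints and $B = C(G\Box P_n)$, then fitting a linear formula. The predicted $a$ and $b$ are then verified to agree with an explicit construction furnishing the matching upper bound, exactly as outlined in \Cref{sec-upperbounds}: a generic enumeration scheme drawing all but a constant number of ``front'' and ``end'' copies of $G$ identically. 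These drawings, together with their crossing counts, are tabulated in the appendix.

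Next, for each graph we must establish the lower bound $\crg(G\Box P_n) \geq an-b$ for sufficiently large $n$. The approach is to try successively more expensive configurations of the theorems until one succeeds. First, for the pair $(d,m) = (2,1)$, we invoke the BCCNP solver to compute the \hexaforceName~$(\forceF,\forceM,\forceE)$ and check the hypotheses of \Cref{cor-force}. If this fails, we compute the star-forces $(\forceStarF,\forceStarM,\forceStarE)$ and attempt \Cref{thm-force-mf-star} first with $\numFPF=0$ and then with $\numFPF = d-m+1$. If none of these yield the bound, the same procedure is repeated for $(d,m) = (3,1)$ and then $(4,2)$. Whenever a check succeeds, we also invoke the BCCNP solver to establish the base case $\crg(G\Box P_{d+\numFPF'-2}) \geq a(d+\numFPF'-2)-b$ required by the applicable theorem. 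The certificates for all of these BCCNP computations, together with the independent validator, live at~\cite{results-webpage}.

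The main obstacle is not conceptual but computational: the BCCNP solver must terminate within a reasonable budget on each of the several hundred subinstances that arise (one per graph, per choice of $(d,m)$, per force being computed, per base case). Since $G\Box P_d$ with $|V(G)|=6$ and $d \leq 4$ produces graphs with up to $30$ vertices and on the order of $50$--$70$ edges, and since the capacity-constrained and binary-weighted modifications further strain the ILP formulation (the weighted objective requires the $\varepsilon$-coefficient trick from \cite{chimanietal2008_2} to exclude degenerate drawings, and each capacity constraint~\eqref{eq:capconstr} adds a potentially large knapsack-type inequality), solver timeouts are the realistic failure mode. Empirically, as recorded in \Cref{tab-results}, this failure occurs for exactly five of the $112$ connected 6-vertex graphs, namely $i \in \{31,48,73,142,156\}$, which are therefore excluded from the statement; for the remaining $107$, the procedure terminates successfully.

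Finally, the table entries must be packaged in the stated form. For each successful graph, the combination of the verified upper bound (via the explicit drawing) and the verified lower bound (via \Cref{cor-force} or \Cref{thm-force-mf-star} with the corresponding base case) yields equality $\crg(G^6_i\Box P_n) = an - b$ for all $n$ at least the threshold prescribed by the applied theorem; the remark ``in most cases $n \geq 1$'' reflects that the base case usually handles $n = 1,\dots,d+\numFPF'-2$ as well, so that the formula in fact extends down to $n = 1$ whenever the small-$n$ behaviour coincides with the linear fit. Partitioning the $107$ graphs according to the pair $(a,b)$ then produces the rows of the displayed table, and assigning each index to the ``new'' or ``reproved'' column is a bookkeeping step against the survey~\cite{clancyetal2019}.
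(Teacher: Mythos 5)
Your proposal follows essentially the same route as the paper: predict $a,b$ from small cases, certify matching upper bounds by the generic two-copy drawing scheme (with the special paired-copy drawings for the marked indices), and establish lower bounds by running \Cref{cor-force} and then \Cref{thm-force-mf-star} (with $\numFPF=0$, then $\numFPF=d-m+1$) over the $(d,m)$ pairs $(2,1),(3,1),(4,2)$, with BCCNP certificates for the forces and base cases. The only slight mismatch is your attribution of all five exclusions to solver timeouts; the paper indicates that at least $i=31$ fails for a structural reason (two crossings can ``hide'', so the star-force conditions are genuinely not met), but since those graphs are excluded from the statement this does not affect correctness.
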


The appendix lists all these graphs, together with their upper bound construction and the required values for $d,m,\numFPF$. As mentioned above, the computer generated proof files for the BCCNP base cases (together with a proof validator) can be found at~\cite{results-webpage}.
It is worth noting that in 78 of the 112 cases the smallest setting, $d=2$ and $m=1$, already suffices to prove the claim with \cref{thm-force-mf-star} and in 66 cases, even with \cref{cor-force}.

\delConf{\begin{proof}In each case we are able to establish the lower bounds using at least one of \Cref{cor-force,thm-force-mf-star}. \cref{tbl:res:six} list the parameter settings which were used to establish the lower bounds for each successfully solved graph. Proof files for each of the obtained values can be found at \cite{proofSite} along with a tool that can be used to verify the results.

Then, all that remains is to handle the upper bounds. Also contained in \cref{tbl:res:six} are small figures corresponding to each graph, indicating a drawing procedure which establishes the desired upper bound. The small figures should be interpreted as follows. To draw $G \Box P_n$ with the indicated number of crossings, $n+1$ copies of $G$ should be drawn side by side, with the path edges forming horizontal lines. We call the first and last copies of $G$ the {\em end copies}, and the remaining $n-1$ copies are the {\em middle copies}. In the small figures from \cref{tbl:res:six}, we show two drawings of $G$. The end copies should be drawn identically to the right-most drawing in the small figure (taking the mirror image for the first copy). All of the middle copies should be drawn identically to the left-most drawing in the small figure. The edge colors are chosen arbitrarily, but are consistent between the copies. Any edges which end up stubs at the top and bottom are routed around the graph and connected according to the symbol of the stub and the edge color. The figures are presented in such a way that routing the edges around the graph never needs to introduce any additional crossings. If, in the small figure, the left-most drawing contains $c_l$ crossings, and the right-most drawing contains $c_r$ crossings, then an upper bound of $c_l(n-1) + 2c_r$ is established for $\crg(G \Box P_n)$.
For the graphs with indices $i=42,62,63,82,95$ marked by a {\color{red} !} between its two copies in \cref{tbl:res:six}, additional drawings are provided in \cref{tbl:res:special:six}, because for a sufficient upper bound for these graphs we need to draw two neighbouring copies together either at the ends or in the middle.
\end{proof}}

We note that, of the five graphs on six vertices we were unable to obtain a solution, four have already been handled in the literature. In particular, the graph with index $i = 31$ was handled in Bokal \cite{bokal2007}, the graphs with indices $i = 48, 73$ were handled in Kle\v{s}\v{c} and Petrillov\'{a} \cite{klescpetrillova2013_2}, and the graph with index $i = 156$ was handled in Zheng et al. \cite{zhengetal2007}. As such, there is only one graph remaining from the six-vertex set that is yet to be handled in the literature at all, specifically the graph with index $i = 142$. It is equivalent to $K_5$ with a pendant edge attached to one of the vertices, and based on our experiments it appears the crossing number should be $9n-3$.

Finally, we also tackled the Cartesian products of paths with any connected 7-vertex graph. There are few existing crossing number results on these graph classes (nine, according to~\cite{clancyetal2019}).
Our goal here is to determine how many of these graph classes are in fact relatively simple---in terms of solvability using our framework---and thus do not merit specific treatment in the literature. On the other hand, we hope that (failings of) our approach may point at graphs which require us to identify new lower bound arguments. Overall, we see that already with only small values of $d,m,\numFPF$, our approach generates proofs for nearly 4/5 of all $G^7\Box P_n$ graph classes. Again, see the appendix for more details.

\section{Conclusions}\label{sec-conclusions}

The potency of the approach described in this manuscript is demonstrated by the number of new results that have been established, particularly given that this field of research has been saturated for decades with papers painstakingly establishing one or two results at a time.

In addition to the volume of new results, another benefit of this approach is that it lets us identify the truly difficult graphs, compared to graphs which just require more exhaustive versions of existing arguments. These difficult graphs can then be given individual attention. As shown in the previous section, our approach was successful for all but five graphs from the six-vertex set. Four of these five graphs were previously handled in the literature. Two of them ($i = 31, 156$) required specialised arguments applicable only to those graphs, while the other two ($i = 48, 73$) were corollaries. Researchers in this field can now focus on the one remaining case ($i = 142$), potentially handling not only this graph, but perhaps a family of graphs which emerge from complete graphs with pendant edges added.

Our star-forces, as strengthenings of our basic forces, have essentially accounted for cases where one crossing was able to ``hide'' in a nearby crossing band. Although this proved to be powerful, it stands to reason that there could be instances where multiple crossings are able to hide, and an accordingly stronger definition and theorem statement would be required. Indeed, an analysis of the $i = 31$ instance shows this to be exactly the case; in this graph, two crossings can hide. Given this context, our approach could perhaps be viewed as the first step in a more general approach that seeks to account for cases where arbitrarily many crossings can hide.
Furthermore, it seems likely that the BCCNP framework can be used to attack other families of graphs, particularly those resulting from graph products. In particular, we anticipate that it should be possible to obtain new results for Cartesian products with cycles, although this will likely require further fine-tuning of the crossing bands and corresponding forces. Other common families such as Cartesian products with stars, as well as other kinds of graph products such as strong products, should also be considered in this framework.

\newpage

\appendix
\renewcommand\thefigure{A\arabic{figure}}    
\setcounter{figure}{0} 
\renewcommand{\thetable}{A\arabic{table}}
\setcounter{table}{0}
\section{APPENDIX}

\cref{tbl:res:six,tbl:res:five,tbl:res:seven} list the smallest $(d,m)$ pair for which at least one of \Cref{cor-force,thm-force-mf-star} establishes the lower bounds for each successfully solved graph.
The background color intensity reflects how high the values are.
The \texttt{d,m}${}^*$ indicates that \cref{cor-force} does not suffice and that \cref{thm-force-mf-star} with $\numFPF=0$ is needed.
\texttt{d,m}${}_*^*$ indicates that \cref{thm-force-mf-star} with $\numFPF = d-m+1$ is needed.
The blueness of the background also reflects this.
Proof files for each of the obtained values can be found at \cite{rawdata} along with a tool that can be used to verify the results.

Then, all that remains is to handle the upper bounds. Also contained in \cref{tbl:res:six,tbl:res:five,tbl:res:seven} are small figures corresponding to each graph, indicating a drawing procedure which establishes the desired upper bound. The small figures should be interpreted as follows. To draw $G \Box P_n$ with the indicated number of crossings, $n+1$ copies of $G$ should be drawn side by side, with the path edges forming horizontal lines. We call the first and last copies of $G$ the {\em end copies}, and the remaining $n-1$ copies are the {\em middle copies}. In the small figures from \cref{tbl:res:six}, we show two drawings of $G$. The end copies should be drawn identically to the right-most drawing in the small figure (taking the mirror image for the first copy). All of the middle copies should be drawn identically to the left-most drawing in the small figure. The edge colors are chosen arbitrarily, but are consistent between the copies. Any edges which end up stubs at the top and bottom are routed around the graph and connected according to the symbol of the stub and the edge color. The figures are presented in such a way that routing the edges around the graph never needs to introduce any additional crossings. If, in the small figure, the left-most drawing contains $c_l$ crossings, and the right-most drawing contains $c_r$ crossings, then an upper bound of $c_l(n-1) + 2c_r$ is established for $\crg(G \Box P_n)$.
For the graphs marked by a {\color{red} !} between its two copies in \cref{tbl:res:six,tbl:res:seven}, we need to draw two neighboring copies together either at the ends or in the middle for a sufficient upper bound.
Essentially, we use one (or more) of the following three strategies:\\[-4em]

{\centering 
\hspace{\fill}
\specialFortyTwo
\hspace{\fill}
\specialSixtyThree
\hspace{\fill}
\specialEightyTwo
\hspace{\fill}
\\[1.5em]}

We use the following symbols to indicate the status of the results:\\[-4em]

{\centering
\resultsTblLegend
\pagebreak
\captionof{figure}{Results for 5-vertex graphs. We omit the planar instances $i = 1, 8$.}
\resultsFiveTbl
\label{tbl:res:five}
\vspace{2em}
\captionof{figure}{Results for 6-vertex graphs}
\resultsSixTbl
\label{tbl:res:six}
\vspace{2em}
\captionof{figure}{Successful results for 7-vertex graphs.}
\resultsSevenTbl
\label{tbl:res:seven}
\\}
\end{document}